\author{ADG}
\newtheorem{theorem}{Theorem}
\newtheorem{remark}{Remark}[section]
\title{Reflecting diffusions and hyperbolic Brownian motions in multidimensional spheres }
\author{ \Large{Olga Aryasova}\footnote{oaryasova@mail.ru}\\
Institute of Geophysics\\
National Academy of Sciences of Ukraine\\
 Palladin pr. 32, 03680 Kiev-142, Ukraine\\\\
\Large Alessandro De Gregorio\footnote{alessandro.degregorio@uniroma1.it},
Enzo Orsingher \footnote{enzo.orsingher@uniroma1.it}	\\
Dipartimento di Scienze Statistiche\\
``Sapienza'', University of Rome\\
P.le Aldo Moro, 5 - 00185, Rome, Italy}
\numberwithin{equation}{section}
\begin{document}

\maketitle
\begin{abstract}

Diffusion processes $(\underline{\bf X}_d(t))_{t\geq 0}$ moving inside spheres $S_R^d \subset\mathbb{R}^d$ and reflecting orthogonally on their surfaces $\partial S_R^d$ are considered. The stochastic differential equations governing the reflecting diffusions are presented and their kernels and distributions explicitly derived. Reflection is obtained by means of the inversion with respect to the sphere $S_R^d$. The particular cases of Ornstein-Uhlenbeck process and Brownian motion are examined in detail. 

 The hyperbolic Brownian motion on the Poincar\`e half-space $\mathbb{H}_d$ is examined in the last part of the paper and its reflecting counterpart within hyperbolic spheres is studied. Finally a section is devoted to reflecting hyperbolic Brownian motion in the Poincar\`e disc $D$ within spheres concentric with $D$.
 \\
 
 {\it Key words}: Bessel process, circular inversion, hyperbolic distance, Meyer-Ito formula, Ornstein-Uhlenbeck process, Poincar\`e half-space

\end{abstract}

\section{Introduction}

The problem of the existence and of the uniqueness of the solution of stochastic differential equations with reflecting boundary conditions has been tackled by many authors. For instance Stroock and Varadhan (1971) proved the existence of the weak solution in a smooth domain, while Tanaka (1979) considered convex domains. Lions and Sznitman (1984) and Saisho (1987) provided a direct approach to the solution of stochastic differential equations with reflecting boundary conditions in nonsmooth region by exploiting the Skorohod problem. Let $D$ be a subset of $\mathbb{R}^d$, then we define a reflecting stochastic differential equations (Skorohod equation) along the normal  as 
\begin{equation*}
dX(t)=b(X(t))dt+\sigma(X(t))dB(t)+dL(t),\quad X(0)=0,
\end{equation*}
where $X(t)\in\bar D$ for all $t\geq 0$ and $L(t)$ is a bounded variation process such that
\begin{align*}
L(t)=\int_0^tn(X(s))d|L|_s,\quad |L|_t=\int_0^t1_{\partial D}(X(s))d|L|_s
\end{align*}
and $n(x)$ is the unit inward normal to $\partial D$ at $x$. More recently, the reflecting Brownian motion has been analyzed, for instance, by Bass and Hsu (1990). In the present manuscript we adopt a different approach with respect to the above mentioned papers.

 In our work we consider diffusion processes $(\underline{\bf X}_d(t))_{t\geq 0}$ in $\mathbb{R}^d$ reflecting (orthogonally) on the surface of spheres of radius $R$. We study the governing stochastic differential equations (formula \eqref{eq:sderef}) for the radial process $(\mathcal{B}_d(t))_{t\geq 0}$ which reads
\begin{equation*}
\mathcal{B}_d(t)=B_d(t){\bf 1}_{(0,R)}(B_d(t))+\frac{R^2}{ B_d(t)}{\bf 1}_{ [R,\infty)}(B_d(t)).
\end{equation*}
Reflection is thus obtained by means of spherical inversion. For the reflecting processes, we obtain the probability law 
\begin{align*}\label{eq:densd}
\overline{p}_d(r,t)&=r^{d-1}q_d(r,t)+\frac{R^{2d}}{r^{d+1}}q_d\left(\frac{R^2}{r},t\right),
\end{align*}
and the kernel
\begin{equation*}\label{eq:kernd}
\overline{q}_d(r,t)=q_d(r,t)+q_d\left(\frac{R^2}{r},t\right),
\end{equation*}
where $0<r\leq R$.

Furthermore, we examine the Neumann problem for the diffusion equation governing the law and the related kernel in some particular cases. We analyze in detail the reflecting Ornstein-Uhlenbeck process and write for its reflecting radial component $\mathcal{B}_d^{OU}(t)$ the governing stochastic equation. We also give the kernel of the reflecting Ornstein-Uhlenbeck process starting from the origin, that is
\begin{equation}\label{eq:kerouint}
 \overline{w}_d\left(r,t\right)=w_d(r,t)+w_d\left(\frac{R^2}{r},t\right)\quad 0<r\leq R,t>0,
\end{equation}
where
$$w_d(r,t)=\frac{1}{2^{\frac d2-1}\Gamma(\frac d2)\lambda^{\frac d2}}e^{-\frac{r^2}{2\lambda}},\quad \lambda=\frac{e^{2bt}-1}{2b},$$
and the related equation
$$\frac{\partial}{\partial t} \overline{w}_d\left(r,t\right)= \mathcal{L}w_d(r,t)+\tilde{\mathcal{L} }w_d\left(\frac{R^2}{r},t\right)$$
where
$$ \mathcal{L}=\frac12\frac{\partial^2}{\partial r^2}+\left(\frac{d-1}{2r}+b r\right)\frac{\partial }{\partial r},\quad \tilde{\mathcal{L} }=\frac12\frac{r^4}{R^4}\left[\frac{\partial^2 }{\partial r^2}+\left(\frac{3-d}{r}-2b\frac{R^4}{r^3}\right)\frac{\partial}{\partial r}\right].$$
Analogously, we provide the probability density function of $\mathcal{B}_d^{OU}(t)$
$$\overline{z}_d\left(r,t\right)=r^{d-1}w_d(r,t)+\frac{R^{2d}}{r^{d+1}}w_d\left(\frac{R^2}{r},t\right)$$
and the corresponding governing partial differential equation
$$\frac{\partial }{\partial t}\overline{z}_d\left(r,t\right)= \mathcal{L}^*z_d(r,t)+\tilde{\mathcal{L} }^*z_d\left(\frac{R^2}{r},t\right),$$
where $\mathcal{L}^*$, $ \tilde{\mathcal{L}}^*$ are the adjoint of $\mathcal{L}$, $ \tilde{\mathcal{L}}$ respectively.
From the analysis of the reflecting Ornstein-Uhlenbeck process, we extract, as particular case for $b=0$, the reflecting Brownian motion, which was dealt with at page 271, Problem 2, of Ito and McKean (1996).

The last section of the present work deals with the hyperbolic Brownian motion $(\eta_d(t))_{t\geq0}$ on the Poincar\`e half-space $\mathbb{H}_d$. On this process the reader can consult, for instance, Gertsenshtein and Vasiliev (1959), Karpelevich {\it et al.} (1959), Gruet (1996), (2000), Ikeda and Matsumoto (1999) and Lao and Orsingher (2007). By means of the circular inversion we obtain kernel and probability distribution of the hyperbolic reflecting Brownian motion $(\mathcal{E}_d(t))_{t\geq0}$ in spheres of  $\mathbb{H}_d$ with radius $S$. Also the stochastic integral equation governing $(\mathcal{E}_d(t))_{t\geq0}$ is presented. Since the explicit laws of $\eta_d(t),d=2,3,$ are known, we give the following formula
$$P\{\mathcal{E}_d(t)>\overline{\eta}\}=P\{\eta_d(t)>\overline{\eta}\}-P\{\eta_d(t)>S^2/\overline{\eta}\},\quad d=2,3.$$
The Millson's result permits us to generalize the above expression for $P\{\mathcal{E}_{d+2}(t)>\overline{\eta}\}$.

The reflecting hyperbolic Brownian motion in the Poincar\`e disc is analyzed in section 5.3. The simple structure of the generators of this particular case leads to the following stochastic differential equation
\begin{equation}\label{eq:poidiscint}
dD(t)=\frac{(1-D^2(t))^2}{4D(t)}dt+\frac{1-D^2(t)}{\sqrt{2}}dW(t)
\end{equation}
 From \eqref{eq:poidiscint} we are able to write the stochastic equation for the reflecting hyperbolic Brownian motion in a cirlce of radius $V<1$.

\section{Notations and preliminary results}

 For $i=1,2,...,d,$ and $d \geq 2$, let $b_i,\sigma_i$ be bounded measurable functions on $\mathbb{R}$ satisfying the following conditions.
\begin{itemize}
\item[A1]\label{condit:non-degen}
There exists $\mu>0$ such that for all $x\in\mathbb{R}$
$$
\sigma_i(x)>\mu.$$

\item[A2]\label{condit:Lip}
For all $\{x,y\}\subset\mathbb{R}$,
$$
|\sigma_i(x)-\sigma_i(y)|\leq L|x-y|,
$$
where $L$ is a positive constant.

\end{itemize}
For each  $ i=1,2,...,d$, consider a stochastic differential equation
\begin{equation}\label{equation_i}
dX_i(t)=b_i(X_i(t))dt+\sigma_i(X_i(t))dW_i(t),\quad X_i(0)=0,
\end{equation}
where $(W_i(t))_{t\geq0}$ is a standard one-dimensional Wiener process.
There exists a unique strong solution of equation \ref{equation_i} (see Zvonkin, 1974). Besides, the process $(X_i(t))_{t\geq0}$ possesses a density function $p_i(x_i,t), \ x_i\in\mathbb{R}, \ t\geq0.$ Let $\underline{\bf X}_d(t)=(X_1(t),X_2(t),...,X_d(t)),t\geq0,$ be a $d$-dimensional diffusion process where its coordinates $X_i(t)$ are independent. Therefore the probability density function of  $(\underline{\bf X}_d(t))_{t\geq0}$ has the form 
$$
p_d(\underline{{\bf x}}_d,t)=\prod_{i=1}^d p_i(x_i,t), \ x_i\in\mathbb{R}, \ i=1,2,...,d, \ t\geq0,
$$
where $\underline{{\bf x}}_d=(x_1,x_2,...,x_d)$, which is the solution for the following partial differential equation
\begin{align*}
\frac{\partial}{\partial t}p_d(\underline{{\bf x}}_d,t)=\sum_{i=1}^d\left[b_i(x_i)\frac{\partial}{\partial x_i}+\frac{\sigma_i^2(x_i)}{2}\frac{\partial ^2}{\partial x_i^2}\right]
p_d(\underline{{\bf x}}_d,t)
\end{align*}
with initial condition $p_d(\underline{{\bf x}}_d,0)=\delta(\underline{{\bf x}}_d).$

Let $B_d(t):=||\underline{\bf X}_d(t)||=\sqrt{\sum_{i=1}^d X_i^2(t)},t\geq0,$ be the radial process related to $\underline{\bf X}_d(t)$.  The process $B_d(t)$ is a generalization of the classical Bessel process and represents the main object of interest of this section. It is clear that the density function of $(B_d(t))_{t\geq0},$ is given by
\begin{align}
p_d(r,t)&:=\frac{P\{B_d(t)\in dr|B_d(0)=0\}}{dr}\notag\\
&=r^{d-1}\int_0^\pi d\theta_1\cdots\int_0^\pi d\theta_{d-2}\int_0^{2\pi}d\phi\, q_d(r,\underline\theta,t)\prod_{i=1}^{d-1}(\sin\theta_i)^{d-1-i}		\notag	\\
&:=r^{d-1}q_d(r,t)
\end{align}
with $\underline\theta=(\theta_1,...,\theta_{d-1},\phi)$ and $0<\theta_i<\pi,0<\phi<2\pi$.

Now, we take up the study of the stochastic differential equation satisfied by $B_d(t)$. Let $f(\underline{{\bf x}}_d)=||\underline{{\bf x}}_d||$, we observe that
$$\frac{\partial f(\underline{{\bf x}}_d)}{\partial x_i}=\frac{x_i}{||\underline{{\bf x}}_d||},\quad \frac{\partial^2 f(\underline{{\bf x}}_d)}{\partial x_i\partial x_j}=\frac{\delta_{ij}}{||\underline{{\bf x}}_d||}-\frac{x_ix_j}{||\underline{{\bf x}}_d||^3},$$
where $\delta_{ij}$ is the Kronecker's delta. Therefore, $f\in C^2(\mathbb{R}^d\setminus\{0\})$. In other words $f$ is not differentiable at the origin and then we cannot apply the Ito's formula to $f$. For this reason we carry out the stochastic analysis of the above process by means of arguments similar to those used for the classical Bessel process (see Karatzas and Shreve, 1998).

\begin{theorem}\label{teo:sde}
The process  $(B_d(t))_{t\geq0}$ satisfies the following stochastic differential equation
\begin{equation}\label{eq:sde}
dB_d(t)=\sum_{i=1}^d\frac{X_i(t)}{B_d(t)}\sigma_i(X_i(t))dW_i(t)+\frac{1}{2B_d(t)}\sum_{i=1}^d\left[\left\{1-\frac{X_i^2(t)}{B_d^2(t)}\right\}\sigma_i^2(X_i(t))+2X_i(t)b_i(X_i(t))\right]dt
\end{equation}
\end{theorem}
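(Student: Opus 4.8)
The plan is to deduce \eqref{eq:sde} from the one-dimensional equations \eqref{equation_i} by applying Ito's formula to $f(\underline{\bf x}_d)=\|\underline{\bf x}_d\|$, exactly as in the treatment of the classical Bessel process. Since $f\in C^2(\mathbb{R}^d\setminus\{0\})$ only, I would first work away from the origin: fix $\varepsilon>0$, and on any stochastic interval on which $B_d\ge\varepsilon$ the ordinary Ito formula is legitimate. Writing $d\langle X_i,X_j\rangle_t=\delta_{ij}\sigma_i^2(X_i(t))\,dt$ (the coordinates being independent, so the cross variations vanish) and inserting the derivatives $\partial_i f=x_i/\|x\|$ and $\partial_{ij}f=\delta_{ij}/\|x\|-x_ix_j/\|x\|^3$ recorded above, the first-order part produces $\sum_i\frac{X_i}{B_d}(b_i\,dt+\sigma_i\,dW_i)$ and the second-order part produces $\frac12\sum_i\bigl(\frac{1}{B_d}-\frac{X_i^2}{B_d^3}\bigr)\sigma_i^2\,dt$. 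Collecting the two drift contributions under the common factor $\tfrac{1}{2B_d}$ gives precisely the right-hand side of \eqref{eq:sde}.

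So the only real work is to remove the localization and to let the process issue from the origin, i.e.\ to show that the instants at which $B_d$ vanishes contribute nothing. Two things must be checked: that the drift integrals $\int_0^t B_d^{-1}(s)\,ds$ and $\int_0^t B_d^{-3}(s)\sum_iX_i^2\sigma_i^2\,ds$ are a.s.\ finite, and that no extra boundary (local-time) term is created at the origin. Here the ellipticity assumption A1 is decisive: near $0$ the radial process is comparable to a genuine Bessel process of dimension $d\ge2$, for which single points are polar. Hence $B_d(t)>0$ for every $t>0$ a.s., the set $\{t:B_d(t)=0\}$ is Lebesgue-null, and the standard Bessel estimates yield the required integrability near $t=0$. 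Letting $\varepsilon\downarrow0$ and invoking dominated convergence for the stochastic and Lebesgue integrals then extends the identity to all $t\ge0$.

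A clean way to organize the origin analysis --- and the reason the Meyer--Ito formula is natural here --- is to pass through the square. Since $g(u)=\sqrt u$ is smooth on $(0,\infty)$ but singular at $0$, I would first apply the ordinary Ito formula to the smooth map $U(t)=B_d^2(t)=\sum_iX_i^2(t)$, obtaining $dU=2\sum_iX_i\sigma_i\,dW_i+\bigl(2\sum_iX_ib_i+\sum_i\sigma_i^2\bigr)dt$ with $d\langle U\rangle=4\sum_iX_i^2\sigma_i^2\,dt$, and then apply the generalized (Meyer--Ito) formula to $B_d=\sqrt U=g(U)$. This yields $dB_d=g'(U)\,dU+\tfrac12 g''(U)\,d\langle U\rangle$ plus a local-time term of $U$ at $0$; substituting $g'(u)=\tfrac1{2\sqrt u}$, $g''(u)=-\tfrac14u^{-3/2}$ and $\sqrt U=B_d$ reproduces \eqref{eq:sde}, while the vanishing of the local time $L_t^0(U)$ for $d\ge2$ is exactly the polarity statement above. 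I expect this last point --- justifying that $L_t^0(U)=0$ and that the singular drift remains integrable when the motion starts at the origin --- to be the principal obstacle; the remainder is a direct computation.
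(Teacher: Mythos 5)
Your coefficient computations are correct, and your instinct to pass through $U(t)=B_d^2(t)$ is exactly the paper's starting point. But the Meyer--Ito step you lean on is not legitimate: $g(u)=\sqrt{u}$ is \emph{not} a difference of convex functions on any neighbourhood of the origin --- its derivative $\tfrac12 u^{-1/2}$ is unbounded there and its second-derivative measure $-\tfrac14 u^{-3/2}\,du$ is not locally finite --- so the Meyer--Ito formula (Protter, Theorem 70, which the paper does use later, in Theorem \ref{itobessel}, but for the map $g(x)=x{\bf 1}_{(0,R)}(x)+\tfrac{R^2}{x}{\bf 1}_{[R,\infty)}(x)$, which genuinely is DC with bounded derivative) simply does not apply to $\sqrt{U}$. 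In particular the claimed decomposition ``$dB_d=g'(U)\,dU+\tfrac12 g''(U)\,d\langle U\rangle$ plus a local-time term of $U$ at $0$'' has no meaning: in Meyer--Ito the correction is $\tfrac12\int L_t^a\,\mu(da)$ with $\mu=g''$, and here that integral is a priori divergent near $a=0$ rather than an isolated atom $L_t^0(U)$. The device that replaces this inapplicable step is precisely what the paper's proof consists of: it applies the ordinary Ito formula to $g_\varepsilon(Y_d(t))$, where $g_\varepsilon$ is the $C^2$ function equal to $\sqrt{y}$ for $y>\varepsilon$ and to a matching quadratic on $(0,\varepsilon)$, and then shows the three resulting families of terms converge to the terms of \eqref{eq:sde}.

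Your localization variant has a subtler but equally real gap: circularity. You justify polarity of the origin and the integrability $\int_0^t B_d^{-1}(s)\,ds<\infty$ by saying $B_d$ is ``comparable to a genuine Bessel process'' near $0$ --- but a comparison argument needs the SDE for $B_d$ near the origin, which is the statement being proved. The paper avoids polarity altogether: it imports from Bonami \emph{et al.}\ (1971) only the weaker fact that $\{0\le s\le t: B_d(s)=0\}$ is Lebesgue-null (enough for bounded convergence in the drift term $B_i(\varepsilon)$), and it replaces your unproven ``standard Bessel estimates'' by a quantitative argument: Portenko's Gaussian upper bound on the one-dimensional densities gives $P\{Y_d(s)<\varepsilon\}\le P\{X_1^2(s)+X_2^2(s)<\varepsilon\}=O(\varepsilon/s)$ (this is where $d\ge 2$ enters), from which $E\,C(\varepsilon)\to 0$ and, via the Ito isometry, $E\{Z_i(t)-A_i(\varepsilon)\}^2\to 0$ with $Z_i(t)=\int_0^t \frac{X_i(s)}{B_d(s)}\,dX_i(s)$. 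You correctly identified the behaviour at the origin as the principal obstacle, but neither of your two mechanisms for overcoming it is sound as stated; closing the gap honestly forces you back to an $\varepsilon$-approximation of the square root with explicit estimates, i.e.\ to the paper's proof.
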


\begin{proof}
Let us define
$$Y_d(t):=B_d^2(t)=||\underline{{\bf X}}_d(t)||^2.$$

By applying the Ito's formula we obtain that
$$Y_d(t)=2\sum_{i=1}^d\int_0^tX_i(s)dX_i(s)+\sum_{i=1}^d \int_0^t\sigma_i^2(X_i(s))ds$$

Now, for $\varepsilon>0$, we consider the following function
\begin{equation}
g_\varepsilon(y)=
\begin{cases}
\frac38\sqrt{\varepsilon}+\frac{3}{4\sqrt{\varepsilon}}y-\frac{1}{8\varepsilon\sqrt{\varepsilon}}y^2,& y<\varepsilon,\\
\sqrt{y},& y> \varepsilon,
\end{cases}
\end{equation}
which is of class $C^2$ and such that $\lim_{\varepsilon\to 0}g_\varepsilon(y)=\sqrt{y}$, for all $y> 0$. Since
\begin{equation*}
\frac{\partial}{\partial x_i}g_\varepsilon(||\underline{{\bf x}}_d||^2)=
\begin{cases}
\frac{3}{2\sqrt{\varepsilon}}x_i-\frac{1}{2\varepsilon\sqrt{\varepsilon}}||\underline{{\bf x}}_d||^2x_i,& ||\underline{{\bf x}}_d||^2<\varepsilon,\\
\frac{x_i}{||\underline{{\bf x}}_d||},& ||\underline{{\bf x}}_d||^2> \varepsilon,
\end{cases}
\end{equation*}
and
\begin{equation*}
\frac{\partial^2}{\partial x_i^2}g_\varepsilon(||\underline{{\bf x}}_d||^2)=
\begin{cases}
\frac{3}{2\sqrt{\varepsilon}}-\frac{1}{2\varepsilon\sqrt{\varepsilon}}||\underline{{\bf x}}_d||^2-\frac{1}{\varepsilon\sqrt{\varepsilon}}x_i^2,& ||\underline{{\bf x}}_d||^2<\varepsilon,\\
\frac{1}{||\underline{{\bf x}}_d||}-\frac{x_i^2}{||\underline{{\bf x}}_d||^3},& ||\underline{{\bf x}}_d||^2> \varepsilon,
\end{cases}
\end{equation*}
the Ito's rule provides the following equality
\begin{equation}
g_\varepsilon(Y_d(t))=\sum_{i=1}^dA_i(\varepsilon)+\sum_{i=1}^dB_i(\varepsilon)+C(\varepsilon),
\end{equation}
where
\begin{align*}
&A_i(\varepsilon):=\int_0^t\left[\left\{\frac{3}{2\sqrt{\varepsilon}}-\frac{1}{2\varepsilon\sqrt{\varepsilon}}Y_d(s)\right\}{\bf 1}_{(0,\varepsilon)}(Y_d(s))+\frac{1}{B_d(s)}{\bf 1}_{[\varepsilon,\infty)}(Y_d(s))\right]X_i(s)dX_i(s),\\
&B_i(\varepsilon):=\frac12\int_0^t\left[\frac{1}{B_d(s)}-\frac{X_i^2(s)}{B_d^3(s)}\right]\sigma_i^2(X_i(s)){\bf 1}_{[\varepsilon,\infty)}(Y_d(s))ds,\\
&C(\varepsilon):=\int_0^t\frac{1}{4\sqrt{\varepsilon}}\left[3d-(d+2)\frac{Y_d(s)}{\varepsilon}\right]{\bf 1}_{(0,\varepsilon)}(Y_d(s))ds,
\end{align*}
(for the sake of simplicity we omit the dependency from time $t$).

Since $X_i(t)$ never attains the origin (see Bonami {\it et al.}, 1971), the Lebesgue measure of $\{0\leq s\leq t: B_d(s)=0\}$ is zero a.s. and then
$$\int_0^t\left[\frac{1}{B_d(s)}-\frac{X_i^2(s)}{B_d^3(s)}\right]\sigma_i^2(X_i(s))ds<\infty\quad \text{a.s.}$$
Hence, in force of the bounded convergence theorem we obtain that
\begin{align}
\lim_{\varepsilon\to 0}B_i(\varepsilon)&=\frac12\int_0^t\left[\frac{1}{B_d(s)}-\frac{X_i^2(s)}{B_d^3(s)}\right]\sigma_i^2(X_i(s)){\bf 1}_{[0,\infty)}(Y_d(s))ds\notag\\
&=\frac12\int_0^t\left[\frac{1}{B_d(s)}-\frac{X_i^2(s)}{B_d^3(s)}\right]\sigma_i^2(X_i(s))ds\quad \text{a.s.}
\end{align}

Furthermore, $p_i(x_i,t)$, under conditions A1 and A2, admits the upper Gaussian bound $\frac{K}{\sqrt{t}}e^{-\frac{x_i^2}{2\kappa t}}$, where $K$ and $\kappa$ are positive constants (see, for example, Portenko, 1990, Ch.2). Then we have that
\begin{align*}
0\leq E C(\varepsilon)&\leq \frac{3d}{4\sqrt{\varepsilon}}\int_0^tP\{Y_d(s)<\varepsilon\}ds\\
&\leq \frac{3d}{4\sqrt{\varepsilon}}\int_0^tP\{X_1^2(s)+X_2^2(s)<\varepsilon\}ds\\
&\leq  K\frac{3d}{4\sqrt{\varepsilon}}\int_0^t\frac{ds}{s}\int_0^{\sqrt{\varepsilon}}\rho e^{-\frac{\rho^2}{2\kappa s}}d\rho\\
&=  K\frac{3d}{4\sqrt{\varepsilon}}\int_0^{\sqrt{\varepsilon}}\rho d\rho\int_0^{t}\frac{e^{-\frac{\rho^2}{2\kappa s}}}{s}ds\\
&=(w=\rho/\sqrt{\kappa s})\\
&= K\frac{3d}{2\sqrt{\varepsilon}}\int_0^{\sqrt{\varepsilon}}\rho d\rho\int_{\rho/\sqrt{\kappa t}}^{\infty}\frac{e^{-\frac{w^2}{2}}}{w}dw
\end{align*}
and  by means of L'Hopital's rule we can conclude that $\lim_{\varepsilon\to 0}E C(\varepsilon)=0$.
Let $Z_i(t)=\int_0^t\frac{X_i(s)}{B_d(s)}dX_i(s)$, we get that
\begin{align*}
E\left\{Z_i(t)-A_i(\varepsilon)\right\}^2&=E\left\{\int_0^t{\bf 1}_{(0,\varepsilon)}(Y_d(s))\left[\frac{1}{B_d(s)}-\left(\frac{3}{2\sqrt{\varepsilon}}-\frac{1}{2\varepsilon\sqrt{\varepsilon}}Y_d(s)\right)\right]X_i(s)dX_i(s)
\right\}^2\\
&=E\left\{\int_0^t{\bf 1}_{(0,\varepsilon)}(Y_d(s))\left[\frac{1}{B_d(s)}-\left(\frac{3}{2\sqrt{\varepsilon}}-\frac{1}{2\varepsilon\sqrt{\varepsilon}}Y_d(s)\right)\right]^2X_i^2(s)\sigma_i^2(X_i(s))ds
\right\}\\
&=E\left\{\int_0^t{\bf 1}_{(0,\varepsilon)}(Y_d(s))\left[1-\frac12\sqrt{\frac{Y_d(s)}{\varepsilon}}\left(3-\frac{1}{\varepsilon}Y_d(s)\right)\right]^2\left(\frac{X_i(s)}{B_d(s)}\right)^2\sigma_i^2(X_i(s))ds\right\}\\
&\leq E\left\{\int_0^t{\bf 1}_{(0,\varepsilon)}(Y_d(s))\sigma_i^2(X_i(s))ds\right\}\\
&\leq K^2\int_0^tP\{Y_d(s)<\varepsilon\}ds
\end{align*}
which tends to zero as $\varepsilon$ goes to zero.

\end{proof}

 For $\sigma_i(X_i(t))=\sigma$ and $b_i(X_i(t))=bX_i(t)$, the process $(X_i(t))_{t\geq 0}$ becomes the Ornstein-Uhlenbeck process, namely 
\begin{equation}\label{eq:ou}
X_i(t)=\sigma\int_0^te^{-b(t-s)}dW_i(s).
\end{equation}
Clearly in this case $b_i(x)=bx$ is not a bounded function in $\mathbb{R}$. Nevertheless, the statement in Theorem \ref{teo:sde} still holds for the Ornstein-Uhlenbeck process because it is a gaussian process with variance $\frac{e^{2bt}-1}{2b}$. Then we can mimic the proof of Theorem \ref{teo:sde} and the equation \eqref{eq:sde} reduces to
\begin{equation}\label{eq:sde2}
dB_d(t)=\sigma\sum_{i=1}^d\frac{X_i(t)}{B_d(t)}dW_i(t)+\frac{1}{2B_d(t)}\left[(d-1)\sigma^2+2b\sum_{i=1}^dX_i^2(t)\right]dt.
\end{equation}

We observe that $\sum_{i=1}^d\int_0^t \frac{X_i(s)}{B_d(s)}dW_i(s):=\sum_{i=1}^d Y_i(t)$ is a standard Brownian motion. Indeed, we have that
\begin{align}
[Y_i,Y_j]_t=\int_0^t\frac{1}{B_d^2(s)}X_i(s)X_j(s)d[W_i,W_j]_s=\delta_{ij}\int_0^t\frac{1}{B_d^2(s)}X_i(s)X_j(s)ds
\end{align}
and then $\sum_{i=1}^d[Y_i]_t=t$. Furthermore, $Y_i$ is a square integrable martingale because we have that
$E(Y_i(t))^2<\infty$. Therefore, by means of L\`evy's characterization theorem we conclude that $\sum_{i=1}^d Y_i(t)$ is a standard Brownian motion that we indicate by $W(t)$. Therefore we get that
\begin{equation}\label{eq:sde2bis}
dB_d(t)=\sigma dW(t)+\left[\frac{(d-1)\sigma^2}{2B_d(t)}+bB_d(t)\right]dt.
\end{equation}

From \eqref{eq:sde2bis} by setting $b=0$ and $\sigma=1$, we obtain the stochastic differential equation for the classical Bessel process.

\section{Reflecting diffusion processes within an Euclidean sphere}\label{sect:diff}
\subsection{General case}
Let us consider a multidimensional diffusion process $(\underline{\bf X}_d(t))_{t\geq0}$ defined as in the previous section. When a sample path of the process hits the surface of the $d$-dimensional sphere $S_R^d$ with radius $R$ and centre at the origin $\underline{O}_d$ of $\mathbb{R}^d$, the sample path of the process is orthogonally reflected. This leads to a new process, namely the reflecting process $ (\underline{\mathcal{ X}}_d(t))_{t\geq0}$ moving inside the $d$-dimensional ball $S_R^d$. 

We introduce the reflecting diffusion process by means of the circular inversion of a point with respect to the circle. If we consider a point $x$ inside $S_R^d$, having polar coordinates equal to $(r,\theta)$, we can find another point $y$ in the space $\mathbb{R}^d$ with polar coordinates given by $(r',\theta)$ ($R<r'$), such that $rr'=R^2$. The point $b$ is called the inverse point of $a$ with respect to $S_R^d$.  Therefore if we indicate by $\mathcal{B}_d(t)=||\underline{\mathcal{ X}}_d(t)||,t\geq0,$ the radial component of $(\underline{\bf X}_d(t))_{t\geq0}$, we have that 
\begin{equation}\label{eq:dbessref}
\mathcal{B}_d(t)=
\begin{cases}
B_d(t),& B_d(t)\in(0,R),\\
\frac{R^2}{ B_d(t)},& B_d(t)\in[R,\infty),
\end{cases}
\end{equation}
or equivalently
\begin{equation}
\mathcal{B}_d(t)=B_d(t){\bf 1}_{(0,R)}(B_d(t))+\frac{R^2}{ B_d(t)}{\bf 1}_{ [R,\infty)}(B_d(t)).
\end{equation}

 The process \eqref{eq:dbessref}, has two components: the first one in the interval $(0,R)$ is related to the Bessel process without reflection and the second component concerns the reflection of the sample path crossing the surface of the sphere. In other words the excursions of sample paths of  $ (\underline{\bf{ X}}_d(t))_{t\geq0}$ outside the sphere are mapped inside $S_R^d$ by using the circular inversion  (see Figure \ref{fig1}). When the sample path of the non-reflecting process tends to go far from the origin the reflected trajectory is located near the origin of $\mathbb{R}^d$.
\begin{figure}[t]
\begin{center}
\includegraphics[angle=0,width=0.8\textwidth]{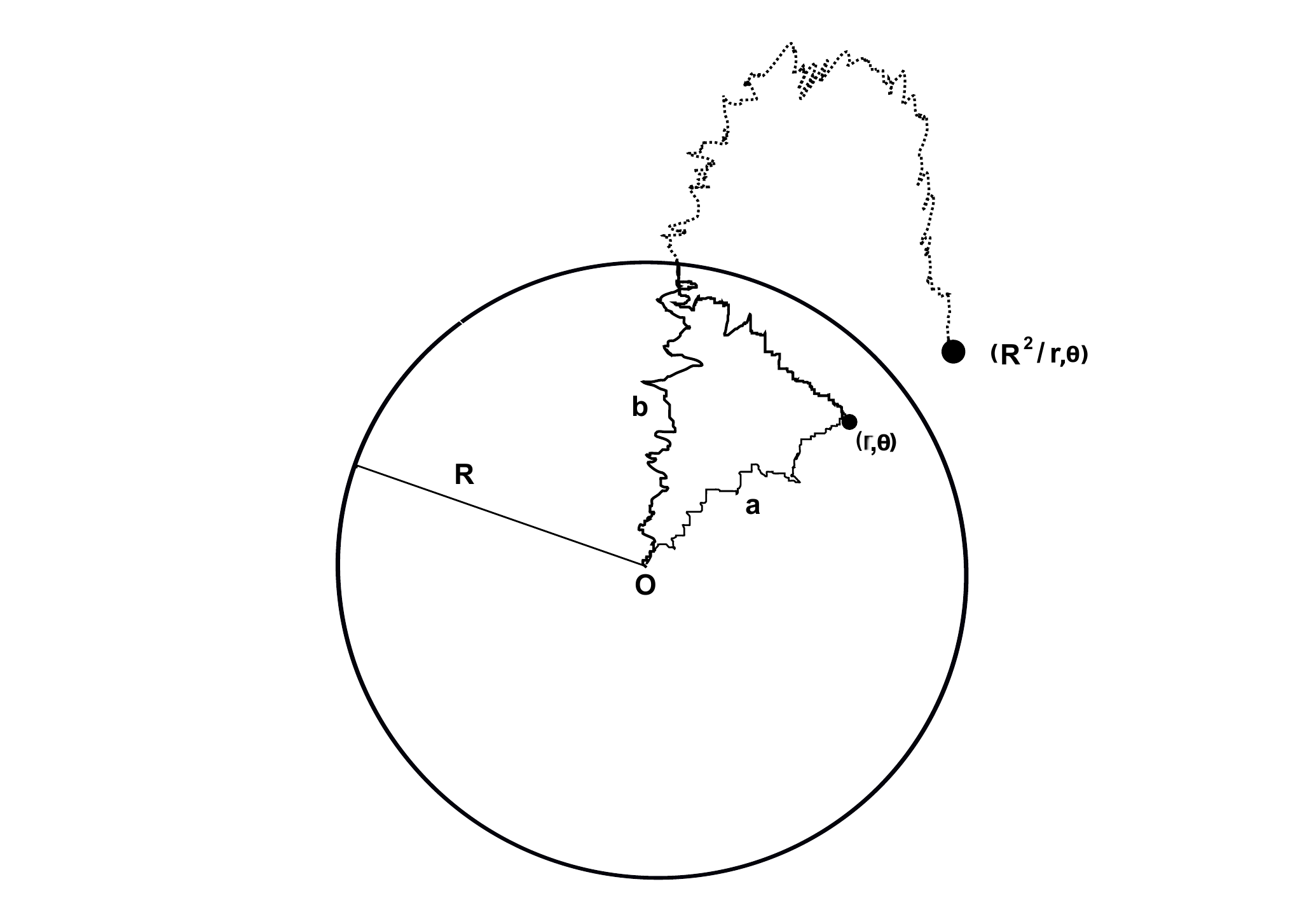}
\caption{In figure two sample paths a and b are depicted. The trajectory wandering outside the circle $S_R^d$ (indicated by the dotted line) is reflected inside $S_R^d$ by circular inversion.}\label{fig1}
\end{center}
\end{figure}

\begin{theorem}\label{teo:kerndenssde}
The kernel of $(\mathcal{B}_d(t))_{t\geq0}$ is given by
\begin{equation}\label{eq:kernd}
\overline{q}_d(r,t)=q_d(r,t)+q_d\left(\frac{R^2}{r},t\right),
\end{equation}
satisfying Neumann condition $$\frac{\partial }{\partial r}\overline q_d(r,t)\Big|_{r=R}=0,$$ 
while the density function becomes
\begin{align}\label{eq:densd}
\overline{p}_d(r,t)&=r^{d-1}q_d(r,t)+\frac{R^{2d}}{r^{d+1}}q_d\left(\frac{R^2}{r},t\right),
\end{align}
where $0<r\leq R$.
\end{theorem}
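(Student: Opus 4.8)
The plan is to establish the two formulas by separate arguments, since $\overline{p}_d$ is the genuine probability density of $\mathcal{B}_d(t)$ on $(0,R]$ obtained by a change of variables, whereas $\overline{q}_d$ is built by the method of images so as to enforce the Neumann condition; both are assembled from the non-reflecting objects $q_d$ and $p_d=r^{d-1}q_d$, but the inversion acts on them differently. For the density I would start from the pathwise representation \eqref{eq:dbessref}, which writes $\mathcal{B}_d(t)=g(B_d(t))$ with $g(b)=b$ on $(0,R)$ and $g(b)=R^2/b$ on $[R,\infty)$. Both branches of $g$ take values in $(0,R]$, so for fixed $0<r<R$ the event $\{\mathcal{B}_d(t)\le r\}$ splits into the two disjoint events $\{B_d(t)\le r\}$ and $\{B_d(t)\ge R^2/r\}$ (disjoint because $r<R<R^2/r$). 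Differentiating the corresponding distribution functions in $r$ and applying the chain rule to the inversion branch, whose contribution picks up the Jacobian $|d(R^2/r)/dr|=R^2/r^2$, gives
\begin{equation*}
\overline{p}_d(r,t)=p_d(r,t)+p_d\!\left(\frac{R^2}{r},t\right)\frac{R^2}{r^2}.
\end{equation*}
Inserting $p_d(\cdot,t)=(\cdot)^{d-1}q_d(\cdot,t)$ and simplifying $(R^2/r)^{d-1}R^2/r^2=R^{2d}/r^{d+1}$ yields \eqref{eq:densd}; this branch is routine bookkeeping.

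For the kernel the idea is the classical method of images for a Neumann problem: I would reflect the free kernel $q_d(r,t)$ through the boundary sphere by the inversion $r\mapsto R^2/r$, obtaining the image term $q_d(R^2/r,t)$, and set $\overline{q}_d(r,t)=q_d(r,t)+q_d(R^2/r,t)$ as in \eqref{eq:kernd}. To check the Neumann condition I differentiate and apply the chain rule to the image term: writing $h(r)=R^2/r$, one has $\partial_r q_d(h(r),t)=q_d'(h(r),t)h'(r)$, where $q_d'$ denotes the derivative in the first (spatial) argument and $h'(r)=-R^2/r^2$. Since the inversion fixes the boundary, $h(R)=R$ and $h'(R)=-1$, so the two spatial derivatives cancel exactly at $r=R$:
\begin{equation*}
\frac{\partial}{\partial r}\overline{q}_d(r,t)\Big|_{r=R}=q_d'(R,t)+q_d'(R,t)(-1)=0.
\end{equation*}

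The step I expect to need the most care is the conceptual separation between the two formulas, namely why the density carries the full inversion Jacobian $R^{2d}/r^{d+1}$ in its image term while the kernel carries none. I would reconcile them by noting that the two image terms are consistent: with $\rho=R^2/r$ one has $\rho^{d-1}q_d(\rho,t)\,|d\rho/dr|=R^{2d}/r^{d+1}\,q_d(R^2/r,t)$, so the density's image term is exactly the kernel's image term dressed with the radial volume element, as a probability density must be. A fully rigorous treatment should also confirm that $\overline{q}_d$ solves the governing equation for the kernel, with an inverted operator acting on the image term, in the spirit of the pair $\mathcal{L},\tilde{\mathcal{L}}$ accompanying \eqref{eq:kerouint}; verifying that $q_d(R^2/r,t)$ satisfies this transformed equation is the only genuinely analytic point and, as in the Ornstein--Uhlenbeck and Brownian examples, would be carried out after the boundary condition.
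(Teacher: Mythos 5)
Your proposal is correct and takes essentially the same approach as the paper: the kernel is obtained by adding the image term $q_d\left(\frac{R^2}{r},t\right)$ produced by the inversion $r\mapsto R^2/r$, and the density by the same change of variables with Jacobian $\frac{R^2}{r^2}$, which dressed with the volume factor $\left(\frac{R^2}{r}\right)^{d-1}$ gives exactly the paper's $\frac{R^{2d}}{r^{d+1}}$ (the paper phrases this as a transformation of the infinitesimal volume element $d\mathcal{V}_d$ rather than your CDF decomposition, but it is the same computation). The only differences are presentational and slightly in your favor: your disjoint-event decomposition of $\{\mathcal{B}_d(t)\le r\}$ makes the normalization over $(0,R]$ automatic, whereas the paper verifies $\int_0^R\overline{p}_d(r,t)\,dr=1$ separately via the substitution $y=R^2/r$, and you write out the chain-rule verification of the Neumann condition (using $h(R)=R$, $h'(R)=-1$) that the paper merely asserts is easy to check.
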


\begin{proof}

In order to obtain the kernel and density function of $(\mathcal{B}_d(t))_{t\geq 0},$ we observe that is associated to each point $\underline{{\bf x}}_d$, inside the support of the process, either a part of probability deriving by the sample path that directly reach the point $\underline{{\bf x}}_d$ and the component of probability related to the sample paths moving outside the sphere $S_R^d$ (see Figure \ref{fig1}).
 
The first term in \eqref{eq:kernd} is the kernel of the non-reflecting process $B_d(t)$, while the second term is obtained by considering the inversion with respect to the surface $S_R^d$ of radius $R$ which implies $r'=R^2/r,0<r<R$.  It is easy to verify that $ \overline q_d(r,t)$ satisfies the reflection condition
$$\frac{\partial }{\partial r}\overline q_d(r,t)\Big|_{r=R}=0$$

We observe that the infinitesimal volume element related to the point $(r',\theta)$ is written as $$d\mathcal{V}_d=(r')^{d-1}dr'\prod_{i=1}^{d-1}(\sin\theta_i)^{d-1-i}d\theta_i,$$
where $r'=\frac{R^2}{r}$ and $dr'=\frac{R^2}{r^2}dr$, which imply that
$$d\mathcal{V}_d=\frac{R^{2d}}{r^{d+1}}dr\prod_{i=1}^{d-1}(\sin\theta_i)^{d-1-i}d\theta_i,$$
with $\theta_i\in[0,\pi],i=1,...,d-2$ and $\theta_{d-1}=\phi\in[0,2\pi]$. Then, the density function of $(\mathcal{B}_d(t))_{t\geq 0},$ is given by
\begin{align}
\overline{p}_d(r,t)&=r^{d-1}q_d(r,t)+\frac{R^{2d}}{r^{d+1}}q_d\left(\frac{R^2}{r},t\right),
\end{align}
where $0<r\leq R$.
Furthermore, we have that $\overline p_d(r,t)$ integrates to 1 as we show below
\begin{align*}
\int_{0}^R\overline{p}_d(r,t)dr
&=\int_0^Rr^{d-1}q_d(r,t)dr+R^{2d}\int_0^R\frac{1}{r^{d+1}}q_d\left(\frac{R^2}{r},t\right)dr\\
&=(y=R^2/r)\\
&=\int_0^\infty r^{d-1}q_d(r,t)dr\\
&=1
\end{align*}
This result concludes the proof.

\end{proof}

\begin{remark}
Since the kernel $q_d(r,t)$ of the diffusion is bounded by $\frac{K}{\sqrt{t}}e^{-\frac{r^2}{2\kappa t}}$, where $K$ and $\kappa$ are positive constants, the component related to the reflection, that is $q_d\left(\frac{R^2}{r},t\right)$, tends to zero as $R\to \infty$. Then
$$\overline{q}_d(r,t)\to r^{d-1}q_d(r,t)$$
or equivalently $\mathcal{B}_d(t)\stackrel{d}{\to} B_d(t),$ as $R\to \infty$.
\end{remark}

By means of Theorem \ref{teo:sde}, we are able to provide the stochastic differential equation governing the reflecting process $(\mathcal{B}_d(t))_{t\geq0}.$ 

\begin{theorem}\label{itobessel} The process $(\mathcal{B}_d(t))_{t\geq0}$ solves the following stochastic differential equation
\small{\begin{align}\label{eq:sderef}
&\mathcal{B}_d(t)\notag\\
&=\sum_{i=1}^d\int_0^t\left[{\bf 1}_{(0,R)}(B_d(s))-\frac{R^2}{B_d^2(s)}{\bf 1}_{ [R,\infty)}(B_d(s))\right]\frac{X_i(s)}{B_d(s)}\sigma_i(X_i(s))dW_i(s)\notag\\
&\quad+\sum_{i=1}^d\int_0^t\frac{1}{2B_d(
s)}\Bigg\{\left[{\bf 1}_{(0,R)}(B_d(s))-\frac{R^2}{B_d^2(s)}{\bf 1}_{ [R,\infty)}(B_d(s))\right]\left[\left\{1-\frac{X_i^2(s)}{B_d^2(s)}\right\}\sigma_i^2(X_i(s))+2X_i(s)b_i(X_i(s))\right]\notag\\
&\quad+R^2\frac{X_i^2(s)}{B_d^5(s)}\sigma_i^2(X_i(s)){\bf 1}_{ [R,\infty)}(B_d(s))\Bigg\}ds-L_t(R),
\end{align}}
$L_t(R)$ is the local time of $\mathcal{B}_d(t)$ at the point $R$, defined by
$$L_t(R)=\lim_{\varepsilon \to 0}\frac1\varepsilon\int_0^t{\bf 1}_{[R,R+\varepsilon)}(\mathcal{B}_d(s))ds.$$
\end{theorem}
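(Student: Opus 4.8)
The plan is to recognize the reflecting radial process as a non-smooth deterministic transformation of the semimartingale $B_d$ and then invoke the Meyer-Itô (Itô-Tanaka) formula. Writing $\phi(x)=x\,\mathbf{1}_{(0,R)}(x)+\tfrac{R^2}{x}\mathbf{1}_{[R,\infty)}(x)$, equation \eqref{eq:dbessref} gives $\mathcal{B}_d(t)=\phi(B_d(t))$. The map $\phi$ is continuous on $(0,\infty)$, of class $C^2$ on $(0,R)$ and on $(R,\infty)$, and has a single corner at $x=R$; hence $\phi$ is a difference of two convex functions, and its distributional second derivative is a signed Radon measure. Since by Theorem \ref{teo:sde} the process $B_d$ is a continuous semimartingale that never reaches the origin (so that $1/B_d$, $1/B_d^3$, etc. are a.s.\ finite along the paths), the Meyer-Itô formula applies to $\phi(B_d)$.

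First I would compute the generalized derivatives of $\phi$. On the two smooth pieces $\phi'(x)=\mathbf{1}_{(0,R)}(x)-\tfrac{R^2}{x^2}\mathbf{1}_{(R,\infty)}(x)$ and $\phi''(x)=\tfrac{2R^2}{x^3}\mathbf{1}_{(R,\infty)}(x)$, while at the corner the one-sided derivatives are $\phi'_-(R)=1$ and $\phi'_+(R)=-1$, so the second-derivative measure carries an atom $\phi'_+(R)-\phi'_-(R)=-2$ at $R$; that is, $\phi''(da)=\tfrac{2R^2}{a^3}\mathbf{1}_{(R,\infty)}(a)\,da-2\,\delta_R(da)$. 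The Meyer-Itô formula then yields three contributions. The stochastic integral $\int_0^t\phi'(B_d(s))\,dB_d(s)$, after inserting the SDE \eqref{eq:sde} for $dB_d$, splits into the martingale part and the first drift term of \eqref{eq:sderef}, both carrying the factor $\mathbf{1}_{(0,R)}(B_d)-\tfrac{R^2}{B_d^2}\mathbf{1}_{[R,\infty)}(B_d)=\phi'(B_d)$. The absolutely continuous part $\tfrac12\int \tfrac{2R^2}{a^3}\mathbf{1}_{(R,\infty)}(a)\,L_t^a(B_d)\,da$ is converted, via the occupation times formula together with $d\langle B_d\rangle_s=\sum_i \tfrac{X_i^2(s)}{B_d^2(s)}\sigma_i^2(X_i(s))\,ds$ (read off from the martingale part of \eqref{eq:sde}), into the extra drift term proportional to $R^2\tfrac{X_i^2}{B_d^5}\sigma_i^2\,\mathbf{1}_{[R,\infty)}(B_d)$. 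Finally the atom contributes $\tfrac12(-2)\,L_t^R(B_d)=-L_t^R(B_d)$.

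It remains to identify the semimartingale local time $L_t^R(B_d)$ of $B_d$ at the reflection level $R$ with the quantity $L_t(R)$ defined in the statement for the reflecting process $\mathcal{B}_d$. I would argue this by comparing occupation times across the corner: since near $R$ one has $\phi(x)=R-|x-R|+O((x-R)^2)$, the excursions of $B_d$ on either side of $R$ are folded onto the same side of $R$ by $\mathcal{B}_d$, and a change of variables in the defining approximating ratio matches the two local times. The hard part will be precisely this local-time bookkeeping at $R$ — pinning down the coefficient of the atom (the factor $-2$ coming from the symmetric corner) and justifying the passage from $L_t^R(B_d)$ to $L_t(R)$ — together with checking the integrability needed for the bounded-convergence and occupation arguments, exactly in the spirit of the Bessel-type estimates already carried out in the proof of Theorem \ref{teo:sde}.
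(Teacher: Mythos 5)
Your proposal follows essentially the same route as the paper: write $\mathcal{B}_d(t)=g(B_d(t))$, apply the Meyer--It\^o formula with the generalized second derivative $g''(da)=\frac{2R^2}{a^3}\mathbf{1}_{[R,\infty)}(a)\,da-2\,\delta_R(da)$ (the paper obtains this by integrating against test functions, you by the jump $\phi'_+(R)-\phi'_-(R)=-2$ of the one-sided derivatives --- the same computation), insert the SDE \eqref{eq:sde} into the $\int_0^t D_-g(B_d(s))\,dB_d(s)$ term, and convert the absolutely continuous local-time integral via the occupation-times formula using $[B_d,B_d]_t=\sum_{i=1}^d\int_0^t\frac{X_i^2(s)}{B_d^2(s)}\sigma_i^2(X_i(s))\,ds$, the atom contributing $-L_t(R)$. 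The one step you flag as delicate --- identifying the semimartingale local time of $B_d$ at level $R$ with the quantity $L_t(R)$ defined in the statement for $\mathcal{B}_d$ --- is passed over silently in the paper's proof, so your version is, if anything, slightly more scrupulous on that point.
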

\begin{proof}
We observe that the function
$$g(x)=\begin{cases}
x,& x\in(0,R),\\
\frac{R^2}{x},& x\in[R,\infty),
\end{cases}$$
does not admit second derivative at the point $R$. For this reason we cannot apply the Ito formula but we will use an extension for convex functions, i.e. the Meyer-Ito formula (see Theorem 70 pag. 214, Protter, 2004). 

First of all we prove that the second generalized derivative of $g$ is equal to
$$g''(x)=2\frac{R^2}{x^3}{\bf 1}_{ [R,\infty)}(x)-2\delta(x-R).
$$

Let $\phi:(0,\infty)\to\mathbb{R}$ be a test function which is infinitely differentiable and is identically zero outside some bounded interval of $(0,\infty)$. We have that
\begin{align*}
\int_{0}^{+\infty} g''(x)\phi(x)dx&=\int_{0}^{+\infty}g(x)\phi''(x)dx\\
&=\int_0^Rx\phi''(x)dx+\int_R^{+\infty}\frac{R^2}{x}\phi''(x)dx\\
&=\int_{0}^{+\infty}\left[\frac{2R^2}{x^3}{\bf 1}_{[R,\infty)}(x)-2\delta(x-R)\right]\phi(x)dx.
\end{align*}
The equation \eqref{eq:sde} implies that $(B_d(t))_{t\geq 0}$ is a semimartingale and then we can apply the Meyer-Ito formula to $\mathcal{B}_d(t)=g(B_d(s))$ as follows
\begin{align*}
\mathcal{B}_d(t)&=\int_0^tD_-g(B_d(s))dB_d(s)+\frac12\int_{-\infty}^{+\infty}L_t(a)g''(a)da\\
&=\int_0^t\left[{\bf 1}_{(0,R)}(B_d(s))-\frac{R^2}{B_d^2(s)}{\bf 1}_{ [R,\infty)}(B_d(s))\right]dB_d(s)+\int_{-\infty}^{+\infty}L_t(a)\left[\frac{R^2}{a^3}{\bf 1}_{ [R,\infty)}(a)-\frac{R^2}{a^2}\delta(a-R)\right]da\\
&=\int_0^t\left[{\bf 1}_{(0,R)}(B_d(s))-\frac{R^2}{B_d^2(s)}{\bf 1}_{ [R,\infty)}(B_d(s))\right]\\
&\quad\times\left\{\sum_{i=1}^d\frac{X_i(s)}{B_d(s)}\sigma_i(X_i(s))dW_i(s)+\frac{1}{2B_d(s)}\sum_{i=1}^d\left[\left\{1-\frac{X_i^2(s)}{B_d^2(s)}\right\}\sigma_i^2(X_i(s))+2X_i(s)b_i(X_i(s))\right]ds\right\}\\
&\quad+\sum_{i=1}^d\int_0^t\frac{R^2}{B_d^5(s)}X_i^2(s)\sigma_i^2(X_i(s)){\bf 1}_{ [R,\infty)}(B_d(s))ds-L_t(R),
\end{align*}
where in the last step we have used the cross variation process $$[B_d,B_d]_t=\sum_{i=1}^d\int_0^t\frac{X_i^2(s)}{B_d^2(s)}\sigma_i^2(X_i(s))ds$$ and Corollary 1, pag.216, Protter (2004), which provides the following equality
\begin{align*}
\int_{-\infty}^{+\infty}L_t(a)\frac{R^2}{a^3}{\bf 1}_{ [R,\infty)}(a)da&=\int_0^t\frac{R^2}{B_d^3(s)}{\bf 1}_{ [R,\infty)}(B_d(s))d[B_d,B_d]_s\\
&=\sum_{i=1}^d\int_0^t\frac{R^2}{B_d^5(s)}X_i^2(s)\sigma_i^2(X_i(s)){\bf 1}_{ [R,\infty)}(B_d(s))ds.
\end{align*}
\end{proof}

\subsection{Reflecting Ornstein-Uhlenbeck process}
Let $X_i(t)$ be the classical Ornstein-Uhlenbeck process \eqref{eq:ou} and let $(B_d(t))_{t\geq0}$ be the radial Ornstein-Uhlenbeck process which is solution for equation \eqref{eq:sde2bis}. The transition function for $B_d(t)$ is given by

$$z_d(r,r_0,t,s)=\left(\frac{r}{r_0e^{b(t-s)}}\right)^{\frac d2}\frac{r_0e^{b(t-s)}}{\lambda}\exp{\left\{-\frac{1}{2\lambda}(r^2+r_0^2e^{b(t-s)})\right\}}I_{\frac d2-1}\left(\frac{rr_0e^{b(t-s)}}{\lambda}\right)$$
with $0<s<t$ and $\lambda=\frac{e^{2b t}-1}{2b}$ (see Eie, 1983). For $s$ and $r_0$ which tend to zero the above expression becomes
\begin{equation}\label{eq:densou}
z_d(r,t)=r^{d-1} w_d(r,t)
\end{equation}
with 
\begin{equation}\label{eq:kerou}
w_d(r,t)=\frac{1}{2^{\frac d2-1}\Gamma(\frac d2)\lambda^{\frac d2}}e^{-\frac{r^2}{2\lambda}}.
\end{equation}

The reflecting Ornstein-Uhlenbeck process is a particular case of the process $(\underline{\mathcal{X}}_d(t))_{t\geq0},$ with $b_i(x)=b x_i$ and $\sigma_i(x)=1$, $i=1,2,...,d$. By Theorem \ref{itobessel} and \eqref{eq:sde2bis}, we obtain that the distance from the origin of the reflecting Ornstein-Uhlenbeck process, that we indicate by $(\mathcal{B}_d^{OU}(t))_{t\geq0}$, satisfies the following stochastic integral equation
\begin{align}\label{eq:sdeou}
\mathcal{B}_d^{OU}(t)&=\int_0^t\left[{\bf 1}_{ (0,R)}(B_d(s))-\frac{R^2}{B_d^2(s)}{\bf 1}_{ [R,\infty)}(B_d(s))\right]dW(s)	\notag\\
&\quad+\int_0^t\left[{\bf 1}_{ (0,R)}(B_d(s))-\frac{R^2}{B_d^2(s)}{\bf 1}_{ [R,\infty)}(B_d(s))\right]\left(\frac{d-1}{2B_d(s)}+b B_d(s)\right)ds\notag\\
&\quad+\int_0^t\frac{R^2}{B_d^3(s)}{\bf 1}_{ [R,\infty)}(B_d(s))ds-L_t(R).
\end{align}
Furthermore,  the density function $\overline p_d$ and the kernel $\overline q_d$ of $(\mathcal{B}_d^{OU}(t))_{t\geq0}$ are obtained by plugging into \eqref{eq:densd} and \eqref{eq:kernd} the expressions \eqref{eq:densou} and \eqref{eq:kerou} respectively. 

For the reflecting Ornstein-Unhlenbeck process it is possible to obtain the partial differential equation satisfied by $\overline z_d(r,t)$ and $\overline w_d(r,t)$. The next theorem provides these results.

\begin{theorem}\label{genou}
The kernel function $$\overline{w}_d(r,t)=w_d(r,t)+w_d\left(\frac{R^2}{r},t\right)$$ of the reflecting radial Ornstein-Uhlenbeck process  $(\mathcal{B}_d^{OU}(t))_{t\geq0},$ is the solution to the following Cauchy problem
\begin{equation}\label{eq:kerneqou}
\begin{cases}
\frac{\partial}{\partial t} \overline{w}_d\left(r,t\right)= \mathcal{L}w_d(r,t)+\tilde{\mathcal{L} }w_d\left(\frac{R^2}{r},t\right),\quad 0<r\leq R,\\
\overline{w}_d\left(r,0\right)=\delta(r),\\
\frac{\partial}{\partial r}\overline{w}_d\left(r,t\right)\Big|_{r=R}=0,
\end{cases}
\end{equation}
where
$$ \mathcal{L}=\frac12\frac{\partial^2}{\partial r^2}+\left(\frac{d-1}{2r}+b r\right)\frac{\partial }{\partial r},\quad \tilde{\mathcal{L} }=\frac12\frac{r^4}{R^4}\left[\frac{\partial^2 }{\partial r^2}+\left(\frac{3-d}{r}-2b\frac{R^4}{r^3}\right)\frac{\partial}{\partial r}\right].$$
Furthermore, the density function $$\overline{z}_d(r,t)=r^{d-1}w_d(r,t)+\frac{R^{2d}}{r^{d+1}}w_d\left(\frac{R^2}{r},t\right)$$ is the solution to the following Cauchy problem
\begin{equation}\label{eq:denseqou}
\begin{cases}
\frac{\partial }{\partial t}\overline{z}_d\left(r,t\right)= \mathcal{L}^*z_d(r,t)+\tilde{\mathcal{L} }^*z_d\left(\frac{R^2}{r},t\right),\quad 0<r\leq R,\\
\overline{z}_d\left(r,0\right)=\delta(r),\\
\frac{\partial}{\partial t}\overline{z}_d\left(r,t\right)\Big|_{t=0}=0,
\end{cases}
\end{equation}
where
$$ \mathcal{L}^*=\frac12\frac{\partial^2}{\partial r^2}-\left(\frac{d-1}{2r}+b r\right)\frac{\partial}{\partial r}+\left(\frac{d-1}{2r^2}-b \right),$$$$ \tilde{\mathcal{L}}^*= \frac12\frac{r^4}{R^4}\left[\frac{\partial^2 }{\partial r^2}+\left(\frac{d+1}{r}+2b\frac{R^4}{r^3}\right)\frac{\partial }{\partial r}+\left(\frac{d-1}{r^2}-2\frac{R^4}{r^4}b\right)\right].$$
\end{theorem}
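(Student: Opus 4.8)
The plan is to establish each of the two Cauchy problems by direct verification, using that all the objects involved are explicit Gaussians in $r$ and that the reflection is realized by the inversion $\iota(r)=R^2/r$, an involution of $(0,\infty)$ that fixes $r=R$. For each problem I would check three ingredients separately: the evolution equation on $(0,R)$, the condition at the boundary $r=R$, and the behaviour as $t\to0^{+}$, and I would use throughout the explicit kernel/density split already obtained in Theorem~\ref{teo:kerndenssde}.

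\emph{Evolution equation.} I would split the right-hand side into the ``direct'' and the ``image'' contribution and treat them independently. For the direct term I differentiate the explicit Gaussian $w_d$ (resp. the density $z_d=r^{d-1}w_d$) in $t$ and in $r$; for the density this reproduces the free Fokker--Planck equation $\partial_t z_d=\mathcal L^{*}z_d$ with the stated adjoint, and the companion identity for $w_d$ is obtained from it by conjugation with the weight $r^{d-1}$. The substance of the proof lies in the image term. Setting $\rho=\iota(r)=R^2/r$ one has the chain-rule operator identities
\[
\frac{d}{d\rho}=-\frac{r^{2}}{R^{2}}\frac{d}{dr},\qquad \frac{d^{2}}{d\rho^{2}}=\frac{r^{4}}{R^{4}}\frac{d^{2}}{dr^{2}}+\frac{2r^{3}}{R^{4}}\frac{d}{dr},
\]
so that inserting them into the free operator written in the $\rho$-variable generates exactly the prefactor $\tfrac12 r^{4}/R^{4}$ and the transformed first-order coefficient $(3-d)/r$ of $\tilde{\mathcal L}$; the drift term contributes the summand $2bR^{4}/r^{3}$, and for the density the inversion Jacobian $R^{2d}/r^{d+1}$ carried by the image profile (the volume element $d\mathcal V_d$ computed in the proof of Theorem~\ref{teo:kerndenssde}) produces the additional zeroth-order terms appearing in $\tilde{\mathcal L}^{*}$. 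In this way the time derivative of the image contribution is reproduced by $\tilde{\mathcal L}$ (resp. $\tilde{\mathcal L}^{*}$) applied to the image profile, and adding the two contributions yields the stated right-hand side.

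\emph{Boundary and initial conditions.} The Neumann condition is the cleanest point: since $\iota$ is an involution with $\iota'(R)=-1$, one gets $\partial_r\big[w_d(R^2/r,t)\big]\big|_{r=R}=-\,\partial_r w_d(r,t)\big|_{r=R}$, so the two summands of $\overline w_d$ have opposite radial derivatives at $r=R$ and $\partial_r\overline w_d|_{r=R}=0$; the same cancellation, once the weight $R^{2d}/r^{d+1}$ is taken into account, gives the no-flux condition for $\overline z_d$. For the initial datum I let $t\to0^{+}$, whence $\lambda=(e^{2bt}-1)/(2b)\to0^{+}$: the direct term concentrates to $\delta(r)$, while the image term stays regular and vanishes uniformly on $(0,R]$ because there $R^2/r\ge R>0$ is bounded away from the origin, where all the Gaussian mass collects. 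Hence $\overline w_d(r,0)=\overline z_d(r,0)=\delta(r)$.

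The main obstacle is the algebra in the evolution step. One must push a second-order elliptic operator through the nonlinear map $r\mapsto R^2/r$ while simultaneously conjugating by the non-constant weights $r^{d-1}$ (for the density) and $R^{2d}/r^{d+1}$ (for the image), and keep exact track of all the first- and zeroth-order terms produced by the chain rule and by the Ornstein--Uhlenbeck drift $br$, before matching them against the coefficients of $\tilde{\mathcal L}$ and $\tilde{\mathcal L}^{*}$. To keep the bookkeeping under control I would first carry out the computation for $b=0$, where $\mathcal L$ collapses to the Bessel generator and the statement reduces to the reflected Bessel/Brownian case of It\^o--McKean, and only then reinstate the drift.
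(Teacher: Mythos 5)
Your overall architecture coincides with the paper's: verify the equations term by term, treat the image contribution by the substitution $r'=R^2/r$ with exactly the chain-rule identities you quote, get the Neumann condition for $\overline w_d$ from $\iota'(R)=-1$, and the initial datum from $\lambda\to0^{+}$ --- those two checks are fine and match the paper's (unspelled) verification. But two of your reduction steps fail concretely once $b\neq0$. First, the kernel identity is \emph{not} obtained from the Fokker--Planck identity by conjugation with the weight $r^{d-1}$: a direct computation gives
\[
r^{-(d-1)}\,\mathcal{L}^{*}\bigl(r^{d-1}f\bigr)=\frac12 f''+\Bigl(\frac{d-1}{2r}-br\Bigr)f'-bd\,f,
\]
which differs from $\mathcal{L}$ in the sign of the drift $br$ and by the potential $-bd$; the weight $r^{d-1}$ is the symmetrizing density only in the Bessel case, the Ornstein--Uhlenbeck one being $r^{d-1}e^{br^{2}}$. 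So your plan ``first $b=0$, then reinstate the drift'' breaks precisely at the reinstatement stage (and note that differentiating the explicit Gaussian directly yields the conjugated operator above, so the divergence between the two routes for $b\neq0$ is genuine, not a bookkeeping slip you can compute your way out of). The paper never conjugates: it asserts the two scalar equations $\partial_t w_d=\mathcal{L}w_d$ and $\partial_t z_d=\mathcal{L}^{*}z_d$ independently and transforms each one separately under $r'=R^2/r$. Second, and relatedly, the zeroth-order terms of $\tilde{\mathcal{L}}^{*}$ are not ``produced by the inversion Jacobian $R^{2d}/r^{d+1}$'': in the paper's computation the Jacobian weight is never pushed through the operator; those terms are simply the image under $r\mapsto R^2/r$ of the zeroth-order part $\frac{d-1}{2r'^{2}}-b$ already present in $\mathcal{L}^{*}$. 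If you actually conjugate $\tilde{\mathcal{L}}$ by the weight $(R^2/r)^{d-1}$ (resp.\ by $R^{2d}/r^{d+1}$) you obtain the first-order coefficient $\frac{d+1}{r}-\frac{2bR^{4}}{r^{3}}$ (resp.\ $\frac{d+5}{r}-\frac{2bR^{4}}{r^{3}}$), neither of which matches the stated $\frac{d+1}{r}+\frac{2bR^{4}}{r^{3}}$.

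A smaller but real slip concerns the boundary behaviour of the density: the Cauchy problem \eqref{eq:denseqou} carries no spatial no-flux condition --- its third condition is $\partial_t\overline z_d(r,t)\big|_{t=0}=0$ --- and the cancellation you claim ``once the weight is taken into account'' does not occur. Differentiating the two summands of $\overline z_d$ at $r=R$, the $\partial_r w_d$-contributions cancel but the derivatives of the weights $r^{d-1}$ and $R^{2d}/r^{d+1}$ leave $\partial_r\overline z_d(r,t)\big|_{r=R}=-2R^{d-2}w_d(R,t)\neq0$. So for the density half of the theorem you must verify the stated initial-type condition, not a Neumann one; only the kernel $\overline w_d$ satisfies the reflecting condition $\partial_r\overline w_d\big|_{r=R}=0$.
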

\begin{proof}

The kernel $w_d(r,t)$ satisfies the following partial differential equation
\begin{align*}
\frac{\partial }{\partial t}w_d(r,t)&=\mathcal{L}w_d(r,t)\\
&=\frac12\frac{\partial^2 w_d(r,t)}{\partial r^2}+\left(\frac{d-1}{2r}+b r\right)\frac{\partial w_d(r,t)}{\partial r}
\end{align*}
while the density function $z_d(r,t)$ is solution to the following partial differential equation
\begin{align}\label{eq:pdeou}
\frac{\partial }{\partial t}z_d(r,t)&=\mathcal{L}^*z_d(r,t)\notag\\
&=\frac12\frac{\partial^2 }{\partial r^2}z_d(r,t)-\frac{\partial }{\partial r}\left[\left(\frac{d-1}{2r}+b r\right)z_d(r,t)\right]\notag\\
&=\frac12\frac{\partial^2 }{\partial r^2}z_d(r,t)-\left(\frac{d-1}{2r}+b r\right)\frac{\partial }{\partial r}z_d(r,t)+\left(\frac{d-1}{2r^2}-b \right)z_d(r,t)
\end{align}
with $r>0.$ Therefore, if we consider $r'=R^2/r> R$ (with $0<r\leq  R$) we have that
\begin{equation}\label{eq:inv}
\frac{\partial }{\partial t}w_d(r',t)=\frac12\left[\frac{\partial^2}{\partial r'^2}w_d(r',t)+\left(\frac{d-1}{r'}+2b r'\right)\frac{\partial}{\partial r'}w_d(r',t)\right].
\end{equation}

It is easy to see that
$$\frac{\partial}{\partial r}=-\frac{R^2}{r^2}\frac{\partial}{\partial r'},\quad \frac{\partial^2}{\partial r^2}=\frac{2R^2}{r^3}\frac{\partial}{\partial r'}+\frac{R^4}{r^4}\frac{\partial^2}{\partial r'^2},$$
or equivalently

$$\frac{\partial}{\partial r'}=-\frac{r^2}{R^2}\frac{\partial}{\partial r},\quad \frac{\partial^2}{\partial r'^2}=\frac{r^4}{R^4}\left(\frac{\partial^2}{\partial r^2}+\frac2r\frac{\partial}{\partial r}\right).$$
 Plugging into \eqref{eq:inv} the above expressions, we obtain the following equality
\begin{align*}
\frac{\partial }{\partial t}w_d\left(\frac{R^2}{r},t\right)&= \tilde{\mathcal{L} }w_d\left(\frac{R^2}{r},t\right)\\
&=\frac12\frac{r^4}{R^4}\left[\frac{\partial^2 }{\partial r^2}w_d\left(\frac{R^2}{r},t\right)+\left(\frac{3-d}{r}-2b\frac{R^4}{r^3}\right)\frac{\partial }{\partial r}w_d\left(\frac{R^2}{r},t\right)\right],
\end{align*}
which implies equation \eqref{eq:kerneqou}. It is not hard to check that the initial and the boundary conditions appearing in the problem \eqref{eq:kerneqou} hold.

Now, we focus our attention on the density $z_d$. By considering \eqref{eq:pdeou}, the same substitutions adopted in the previous calculations lead to the following equality
\begin{align*}
\frac{\partial }{\partial t}z_d\left(\frac{R^2}{r},t\right)&= \frac12\frac{r^4}{R^4}\left[\frac{\partial^2 }{\partial r^2}z_d\left(\frac{R^2}{r},t\right)+\left(\frac{d+1}{r}+2b\frac{R^4}{r^3}\right)\frac{\partial }{\partial r}z_d\left(\frac{R^2}{r},t\right)\right]\\
&\quad+\left(\frac{r^2}{R^4}\frac{d-1}{2}-b\right)z_d\left(\frac{R^2}{r},t\right)
\end{align*}
and therefore the result \eqref{eq:denseqou} follows.

\end{proof}

\begin{remark}

For $d=2$, we are able to obtain the distribution function of $(\mathcal{B}_2^{OU}(t))_{t\geq0}$ as follows

\begin{align*}
P\left\{\mathcal{B}_2^{OU}(t)<R'\right\}&=\int_0^{R'}rw_2(r,t)dr+R^4\int_0^{R'}\frac{1}{r^{3}}w_2\left(\frac{R^2}{r},t\right)dr\\
&= \int_0^{R'}rw_2(r,t)dr+ \int_{R^2/R'}^\infty rw_2(r,t)dr\\
&=1-e^{-\frac{R'^2}{2\lambda}}+e^{-\frac{R^4}{2R'^2\lambda}}
\end{align*}
that for small values of $R'$ becomes
\begin{align*}
P\left\{\mathcal{B}_2^{OU}(t)<R'\right\}&\sim 1-\left(1-\frac{R'^2}{2\lambda}\right)=\frac{R'^2}{2\lambda}.
\end{align*}
Furthermore, we observe that
$$P\left\{R_1<\mathcal{B}_2^{OU}(t)<R_2\right\}=e^{-\frac{R_1^2}{2\lambda}}-e^{-\frac{R_2^2}{2\lambda}}+e^{-\frac{R^4}{2R_2^2\lambda}}-e^{-\frac{R^4}{2R_1^2\lambda}}.$$
\end{remark}

\begin{remark}
We observe that the kernel for the reflecting Ornstein-Uhlenbeck process in spherical coordinates admits the following representation
\begin{align*}
\overline{w}_d\left(r,\underline\theta,t\right)&=\frac{\Gamma(d/2)}{2^{\frac d2}\pi^{\frac d2}}\overline{w}_d\left(r,t\right)\\
&=\frac{1}{2^{\frac d2}\pi^{\frac d2}\lambda^{\frac d2}}\left[e^{-\frac{r^2}{2\lambda}}+e^{-\frac{R^4}{2\lambda r^2}}\right]
\end{align*}
that in cartesian coordinates becomes
\begin{align*}
\overline{w}_d\left(\underline{{\bf x}}_d,t\right)=\frac{1}{2^{\frac d2}\pi^{\frac d2}\lambda^{\frac d2}||\underline{{\bf x}}_d||^{d-1}}\left[e^{-\frac{||\underline{{\bf x}}_d||^2}{2\lambda}}+e^{-\frac{R^4}{2\lambda ||\underline{{\bf x}}_d||^2}}\right].
\end{align*}
Analogously, for the density function we have that
\begin{align*}
\overline{z}_d\left(\underline{{\bf x}}_d,t\right)=\frac{1}{2^{\frac d2}\pi^{\frac d2}\lambda^{\frac d2}}\left[e^{-\frac{||\underline{{\bf x}}_d||^2}{2\lambda}}+\frac{R^{2d}}{||\underline{{\bf x}}_d||^{2d}}e^{-\frac{R^4}{2\lambda ||\underline{{\bf x}}_d||^2}}\right].
\end{align*}
\end{remark}
\begin{remark}

Let $m\geq 1$, we have that
\begin{align*}
E\left[\mathcal{B}_d^{OU}(t)\right]^m&=\int_0^Rr^{d-1+m}w_d(r,t)dr+\int_0^R\frac{R^{2d}}{r^{d+1-m}}w_d\left(\frac{R^2}{r},t\right)dr\\
&= (y=R^2/r)\\
&=\int_0^Rr^{d-1+m}w_d(r,t)dr+R^{2m}\int_R^\infty y^{d-1-m}w_d(y,t)dy\\
&=\frac{1}{2^{\frac d2-1}\Gamma(\frac d2)}\left\{\int_0^Rr^{d-1+m}\frac{1}{\lambda^{d/2}}e^{-\frac{r^2}{2\lambda}}dr+R^{2m}\int_R^\infty r^{d-1-m}\frac{1}{\lambda^{d/2}}e^{-\frac{r^2}{2\lambda}}dr\right\}\\
&=(w=r^2/2\lambda)\\
&=\frac{1}{2^{\frac d2-1}\Gamma(\frac d2)}\left\{2^{\frac{d+m}{2}-1}\lambda^{m/2}\int_0^{R^2/2\lambda}e^{-w}w^{\frac{d+m}{2}-1}dw+\frac{R^{2m}2^{\frac{d-m}{2}-1}}{\lambda^{m/2}}\int_{R^2/2\lambda}^\infty e^{-w}w^{\frac{d-m}{2}-1}dw\right\}\\
&=\frac{1}{\Gamma(\frac d2)}\left\{(2\lambda)^{m/2}\gamma\left(\frac{d+m}{2},\frac{R^2}{2\lambda}\right)+\frac{R^{2m}}{(2\lambda)^{m/2}}\Gamma\left(\frac{d-m}{2},\frac{R^2}{2\lambda}\right)\right\},
\end{align*}
where $\gamma(\cdot,\cdot)$ is the incomplete gamma function and $\Gamma(\cdot,\cdot)$ represents its complement.
For $d=3$, the mean value of $(\mathcal{B}_d^{OU}(t))_{t\geq 0}$ becomes
\begin{align*}
E\left[\mathcal{B}_3^{OU}(t)\right]&=\frac{2}{\sqrt{\pi}}\left\{\sqrt{2\lambda}\int_0^{R^2/2\lambda}e^{-w}wdw+\frac{R^{2}}{\sqrt{2\lambda}}\int_{R^2/2\lambda}^\infty e^{-w}dw\right\}\\
&=\frac{2\sqrt{2\lambda}}{\sqrt{\pi}}(1-e^{-\frac{R^2}{2\lambda}}).
\end{align*}

\end{remark}

\subsection{Brownian motion case}
From the reflecting Ornstein-Uhlenbeck process we derive as particular case the reflecting Brownian motion. It is well-known that for $b=0$ the Ornstein-Uhlenbeck process reduces to the standard Brownian motion and consequently $B_d(t)$ becomes the classical $d$-dimensional Bessel process. Let $f_d(r,t)$ and $g_d(r,t)$ be the kernel function and the density function, respectively, of $(B_d(t))_{t\geq0}$, that is
$$ g_d(r,t)=r^{d-1}f_d(r,t)=\frac{r^{d-1}}{2^{\frac d2-1}\Gamma(\frac d2)t^{\frac d2}}e^{-\frac{r^2}{2t}}.$$
Let us indicate by $(\mathcal{B}_d^{W}(t))_{t\geq0}$ the reflecting Bessel process. Theorem \ref{teo:kerndenssde} permits us to write explicit the kernel function $\overline{f}_d(r,t)$ and the density law $\overline{g}_d(r,t)$ of the reflecting Bessel process, while \eqref{eq:sdeou}, for $b=0$, represents the stochastic differential equation governing $(\mathcal{B}_d^{W}(t))_{t\geq0}$.

By setting $b=0$ in the statements of Theorem \ref{genou}, we obtain that
\begin{equation}
\frac{\partial}{\partial t} \overline{f}_d\left(r,t\right)= \mathcal{M}f_d(r,t)+\tilde{\mathcal{M} }f_d\left(\frac{R^2}{r},t\right),
\end{equation}
where
$$ \mathcal{M}=\frac12\frac{\partial^2}{\partial r^2}+\left(\frac{d-1}{2r}\right)\frac{\partial }{\partial r},\quad \tilde{\mathcal{M} }=\frac12\frac{r^4}{R^4}\left[\frac{\partial^2 }{\partial r^2}+\left(\frac{3-d}{r}\right)\frac{\partial}{\partial r}\right].$$
Analogously, the density function $\overline{g}_d(r,t)$ is solution to the following p.d.e.
\begin{equation}
\frac{\partial }{\partial t}\overline{g}_d\left(r,t\right)= \mathcal{M}^*g_d(r,t)+\tilde{\mathcal{M} }^*g_d\left(\frac{R^2}{r},t\right),
\end{equation}
where
$$ \mathcal{M}^*=\frac12\frac{\partial^2}{\partial r^2}-\left(\frac{d-1}{2r}\right)\frac{\partial}{\partial r}+\left(\frac{d-1}{2r^2} \right),\quad \tilde{\mathcal{M}}^*=\frac12\frac{r^4}{R^4}\left[\frac{\partial^2 }{\partial r^2}+\left(\frac{d+1}{r}\right)\frac{\partial }{\partial r}+\left(\frac{d-1}{r^2}\right)\right].$$

\begin{remark}

We observe that the Laplace transform of the density function of the process $(\mathcal{B}_d^{W}(t))_{t\geq 0}$ becomes
\begin{align*}
L\{\overline{g}_d(r,t)\}(s)&=\int_0^\infty e^{-st}\overline{g}_d(r,t)dt\\
&=\frac{r^{d-1}}{2^{\frac d2-1}\Gamma(\frac d2)}\int_0^\infty e^{-st}\frac{e^{-r^2/2t}}{t^{\frac d2}}dt+\frac{R^{2d}}{2^{\frac d2-1}\Gamma(\frac d2)r^{d+1}}\int_0^\infty e^{-st}\frac{e^{-R^4/2tr^2}}{t^{\frac d2}}dt\\
&=\frac{2^{-\frac {d}{4}+\frac32}s^{\frac{d-2}{4}}r^{\frac d2}}{\Gamma(\frac d2)}\left[K_{\frac d2-1}(\sqrt{2s}r)+\frac{R^{d-2}}{r^2}K_{\frac d2-1}\left(\frac{\sqrt{2s}R^2}{r}\right)\right],\quad s>0,
\end{align*}
where
$$K_\nu(z)=\frac12\left(\frac z2\right)^\nu\int_0^\infty e^{-t-\frac{z^2}{4t}}t^{-\nu-1}dt$$
with $\nu\in \mathbb{R}$, is the Bessel function of imaginary argument. Since
$$K_{\frac12}(x)=\left(\frac{\pi}{2x}\right)^{\frac12}e^{-x},$$
for  $d=3$, the following equality holds
\begin{align*}
L\{\overline{g}_3(r,t)\}(s)=\frac{1}{2^{\frac14}}\left[re^{-\sqrt{2s}r}+e^{-\frac{\sqrt{2s}R^2}{r}}\right].
\end{align*}

\end{remark}

\section{Reflecting Hyperbolic Brownian motions}

\subsection{A brief introduction to the hyperbolic framework}
Let us start this section summing up basic definitions and well-known results on the hyperbolic Brownian motion (see Gruet, 1996, 2000 and Lao and Orsingher, 2007).
The $d$-dimensional hyperbolic Brownian motion, with $d\geq2$, is a diffusion process defined on the Poincar\`e upper-half space
$\mathbb{H}_d=\{\underline{{\bf x}}_d:\underline{{\bf x}}_{d-1}\in\mathbb{R}^{d-1}, x_d>0\}$
with metric
$$ds^2=\frac{\sum_{i=1}^d dx_i^2}{x_d^2}$$ The transition density $p_{d}(\underline{{\bf x}}_d,t)$ of the hyperbolic Brownian motion satisfies the following heat-type equation
$$\frac{\partial }{\partial t}p_{d}(\underline{{\bf x}}_d,t)=\frac12\left[x_d^2\sum_{i=1}^d\frac{\partial^2}{\partial x_i^2}-(d-2)x_d\frac{\partial}{\partial x_i}\right]p_{d}(\underline{{\bf x}}_d,t)$$
subject to the initial condition
$$q_d(\underline{{\bf x}}_d,0)=\delta(x_1)\delta(x_2)\cdots\delta(x_{d-1})\delta(x_d-1).$$

The hyperbolic distance $\eta$ from the origin $\underline{O}_d=(\underline0_{d-1},1)$ of $\mathbb{H}_d$ is expressed by means of the following equality
$$\cosh\eta=\frac{\sum_{i=1}^{d-1} x_i^2+x_d^2+1}{2x_d}.$$

The main object of interest in the analysis of the hyperbolic Brownian motion is the process $(\eta_d(t))_{t\geq0},$ which represents the hyperbolic distance process of the Brownian motion in $\mathbb{H}_d$. The kernel $u_d(\eta,t)$ of $(\eta_d(t))_{t\geq0}$ is the solution to the following Cauchy problem
\begin{equation}
\begin{cases}
\frac{\partial}{\partial t} u_d=\mathcal {P}u_d,\\
u_d(\eta,0)=\delta(\eta),
\end{cases}
\end{equation}
where $\mathcal {P}= \frac{1}{2\sinh^{d-1}\eta}\frac{\partial}{\partial\eta}\left(\sinh^{d-1}\eta\frac{\partial}{\partial\eta}\right)$.
For $d=2$, we have that
$$u_2(\eta,t)=\frac{e^{-t/4}}{\sqrt{\pi}(\sqrt{2t})^3}\int_\eta^\infty\frac{\phi e^{-\phi^2/4t}}{\sqrt{\cosh\phi-\cosh\eta}}d\phi,\quad \eta>0,$$
and for $d=3$
$$u_3(\eta,t)=\frac{e^{-t}}{2\sqrt{\pi}t^{3/2}}\frac{\eta e^{-\eta^2/4t}}{\sinh\eta},\quad \eta>0.$$
In general we have that
$$u_{d+2}(\eta,t)=-\frac{ e^{-dt}}{2\pi\sinh\eta}\frac{\partial}{\partial \eta}u_d(\eta,t)$$
with $d=1,2,...$
(see Grigor'yan and Noguchi, 1998).

We observe that the volume element in $\mathbb{H}_d$ is given by
$$d\mathcal{V}_d=\sinh^{d-1}\eta d\eta\prod_{i=1}^{d-1}(\sin\theta_i)^{d-1-i}d\theta_i$$
and then the density law of $(\eta_d(t))_{t\geq0},$ the process describing the hyperbolic distance from $\underline{O}_d$, is equal to
\begin{equation}
p_d(\eta,t)=u_{d}(\eta,t)\sinh^{d-1}\eta,\quad \eta>0,
\end{equation}
which is solution to the following partial differential equation
\begin{align}
\frac{\partial}{\partial t} p_d(\eta,t)&=\mathcal {P}^*p_d(\eta,t)\notag\\
&= \frac{1}{2\sinh^{d-1}\eta}\left[\frac{\partial^2}{\partial\eta^2}-\frac{d-1}{\tanh\eta}\frac{\partial}{\partial\eta}+\frac{d-1}{\sinh^2\eta}\right]p_d(\eta,t).
\end{align}

Furthermore, by considering the infinitesimal generator $\mathcal{P}$, it is possible to show that $(\eta_d(t))_{t\geq0}$ satisfies the following stochastic differential equation
\begin{equation}\label{eq:sdehyp}
d\eta_d(t)=\frac{d-1}{2\tanh \eta_d(t)}dt+dW(t),\quad \eta_d(0)=0
\end{equation}
with $W(t)$ representing a standard Wiener process.

\subsection{Reflecting in spheres of the Poincar\`e half space}

The reflecting hyperbolic Bessel process inside the hyperbolic $d$-dimensional sphere with radius $S$ and centre $\underline{O}_d$, is defined by
\begin{equation}
\mathcal{E}_d(t)=
\begin{cases}
\eta_d(t),& \eta_d(t)\in(0,S),\\
\frac{S^2}{ \eta_d(t)},& \eta_d(t)\in[S,\infty).
\end{cases}
\end{equation}
In the previous definition
we have used the hyperbolic counterpart of the circle inversion, that is
$$\eta\eta'=S^2$$
where $\eta$ e $\eta'$ are the hyperbolic distances from the origin of two points belonging to the same geodesic curve and $S$ represents the radius of a hyperbolic disc.

\begin{theorem}
The kernel related to $(\mathcal{E}_d(t))_{t\geq 0}$ is equal to
\begin{equation}
\overline{u}_d(\eta,t)=u_{d}(\eta,t)+u_{d}\left(S^2/\eta,t\right),\quad 0<\eta\leq S,
\end{equation}
while the density function is given by
\begin{equation}
\overline{p}_d(\eta,t)=(\sinh\eta)^{d-1} u_{d}(\eta,t)+\left(\frac{S}{\eta}\right)^2\left(\sinh\left(\frac{S^2}{\eta}\right)\right)^{d-1}u_{d}\left(\frac{S^2}{\eta},t\right),\quad 0<\eta\leq S.
\end{equation}

\end{theorem}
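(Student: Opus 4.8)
The plan is to mirror the structure of the proof of Theorem \ref{teo:kerndenssde}, since the hyperbolic reflecting process $(\mathcal{E}_d(t))_{t\geq0}$ is built from the non-reflecting distance process $(\eta_d(t))_{t\geq0}$ by exactly the same inversion mechanism $\eta\eta'=S^2$ that was used in the Euclidean case with $rr'=R^2$. The only difference is that the underlying radial coordinate now carries the hyperbolic kernel $u_d(\eta,t)$ and the hyperbolic volume element $\sinh^{d-1}\eta$ in place of $q_d(r,t)$ and $r^{d-1}$. Accordingly I would first recall that, by definition of $\mathcal{E}_d(t)$, a point at hyperbolic distance $\eta\in(0,S)$ receives probability mass from two sources: the sample paths of $\eta_d(t)$ that reach $\eta$ directly, and the paths that wander out beyond $S$ and are folded back inside by the inversion $\eta\mapsto S^2/\eta$. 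The first contribution gives the term $u_d(\eta,t)$ and the second gives $u_d(S^2/\eta,t)$, whence the kernel $\overline{u}_d(\eta,t)=u_d(\eta,t)+u_d(S^2/\eta,t)$.

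To obtain the density $\overline{p}_d(\eta,t)$ I would weight each of the two kernel contributions by the appropriate hyperbolic volume element. For the direct component the volume element is $\sinh^{d-1}\eta\,d\eta$, producing $(\sinh\eta)^{d-1}u_d(\eta,t)$. For the reflected component I would set $\eta'=S^2/\eta$, so that $d\eta'=-(S^2/\eta^2)d\eta$ and the volume element $\sinh^{d-1}\eta'\,d\eta'$ transforms, in absolute value, into $(S/\eta)^2\bigl(\sinh(S^2/\eta)\bigr)^{d-1}d\eta$. Multiplying by the reflected kernel $u_d(S^2/\eta,t)$ yields exactly the second term of the claimed density. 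This is the hyperbolic analogue of the Jacobian computation $d\mathcal{V}_d=(R^{2d}/r^{d+1})\,dr\cdots$ carried out in Theorem \ref{teo:kerndenssde}, with the factor $R^{2d}/r^{d+1}$ replaced by $(S/\eta)^2\sinh^{d-1}(S^2/\eta)$.

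Finally I would verify consistency by checking that $\overline{p}_d(\eta,t)$ integrates to $1$ over $(0,S]$. The substitution $\phi=S^2/\eta$ sends the reflected term's integral $\int_0^S (S/\eta)^2(\sinh(S^2/\eta))^{d-1}u_d(S^2/\eta,t)\,d\eta$ into $\int_S^\infty (\sinh\phi)^{d-1}u_d(\phi,t)\,d\phi$, which glues onto the direct term $\int_0^S(\sinh\eta)^{d-1}u_d(\eta,t)\,d\eta$ to give $\int_0^\infty(\sinh\phi)^{d-1}u_d(\phi,t)\,d\phi=1$, the total mass of the non-reflecting hyperbolic distance process. This is the same telescoping argument as in the Euclidean proof.

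The step I expect to require the most care is the volume-element transformation for the reflected component, since one must track the hyperbolic Jacobian $\sinh^{d-1}(S^2/\eta)$ correctly under the inversion rather than the simple power $r^{d-1}$ of the flat case; everything else is a direct transcription of Theorem \ref{teo:kerndenssde}. I would also note in passing that, unlike the Euclidean situation, here one cannot expect a clean Neumann condition $\partial_\eta\overline{u}_d|_{\eta=S}=0$ to emerge automatically, because the inversion $\eta\mapsto S^2/\eta$ is not an isometry of $\mathbb{H}_d$ and need not commute with the generator $\mathcal{P}$; I would therefore refrain from asserting such a boundary condition unless it can be checked separately.
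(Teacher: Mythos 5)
Your argument is correct and follows essentially the same route as the paper: the paper's proof simply says to adapt the proof of Theorem \ref{teo:kerndenssde}, asserts the reflecting condition at $\eta=S$, and verifies $\int_0^S\overline{p}_d(\eta,t)\,d\eta=1$ --- which is exactly your two-source decomposition, your Jacobian computation $d\eta'=-(S^2/\eta^2)\,d\eta$ for the hyperbolic volume element, and your telescoping normalization via $\phi=S^2/\eta$.

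One correction to your closing caveat, however: the Neumann condition does hold, and the paper explicitly asserts it (its proof writes $\partial/\partial t$ at $\eta=S$, an evident typo for $\partial/\partial\eta$). It is a purely algebraic consequence of the chain rule, requiring no isometry property of the inversion: since $\frac{d}{d\eta}\bigl(S^2/\eta\bigr)\big|_{\eta=S}=-1$,
\begin{equation*}
\frac{\partial}{\partial\eta}\overline{u}_d(\eta,t)\Big|_{\eta=S}
=\left[\frac{\partial u_d}{\partial\eta}(\eta,t)-\frac{S^2}{\eta^2}\,\frac{\partial u_d}{\partial\eta}\left(\frac{S^2}{\eta},t\right)\right]_{\eta=S}
=\frac{\partial u_d}{\partial\eta}(S,t)-\frac{\partial u_d}{\partial\eta}(S,t)=0,
\end{equation*}
where $\partial u_d/\partial\eta$ denotes the derivative in the first argument. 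Your observation that the inversion does not commute with the generator $\mathcal{P}$ is true, but it bears on a different question: it means $\overline{u}_d$ fails to satisfy the unperturbed equation $\partial_t\overline{u}_d=\mathcal{P}\overline{u}_d$, which is precisely why the paper's subsequent theorem introduces the modified operator $\tilde{\mathcal{P}}$ for the reflected term, while the boundary condition at $\eta=S$ survives unchanged.
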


\begin{proof}

In order to prove the statement of this theorem, we can suitably adapt  the proof of Theorem \ref{teo:kerndenssde}.
It is not hard to verify  that the reflecting condition for the kernel $\overline{u}_d(\eta,t)$ of $\mathcal{E}_d(t)$ is satisfied, that is
$$\frac{\partial \overline{u}_d(\eta,t)}{\partial t}\Big|_{\eta=S}=0,$$
and that for the probability density $\overline{p}_d(\eta,t)$ we have that
$$\int_0^S\overline{p}_d(\eta,t)d\eta=1.$$
\end{proof}
\begin{remark}
Since equation \eqref{eq:sdehyp} holds, we can apply the Meyer-Ito formula as in the proof of Theorem \ref{itobessel}. Therefore, we conclude that $\mathcal{E}_d(t)$ is solution for the following stochastic differential equation
\begin{align*}
\mathcal{E}_d(t)
&=\int_0^t\left[{\bf 1}_{ (0,S)}(\eta_d(s))-\frac{S^2}{\eta_d^2(s)}{\bf 1}_{ [S,\infty)}(\eta_d(s))\right]d\eta_d(s)+\int_0^t\frac{S^2}{\eta_d^3(s)}{\bf 1}_{ [S,\infty)}(\eta_d(s))ds-L_t(S)\notag\\
&=\int_0^t\left\{\left[{\bf 1}_{ (0,S)}(\eta_d(s))-\frac{S^2}{\eta_d^2(s)}{\bf 1}_{ [S,\infty)}(\eta_d(s))\right]\frac{d-1}{2\tanh \eta_d(s)}+\frac{S^2}{\eta_d^3(s)}{\bf 1}_{ [S,\infty)}(\eta_d(s))\right\}ds\\
&\quad+\int_0^t\left[{\bf 1}_{ (0,S)}(\eta_d(s))-\frac{R^2}{\eta_d^2(s)}{\bf 1}_{ [S,\infty)}(\eta_d(s))\right]dW(s)-L_t(S),
\end{align*}
where
$$L_t(S)=\lim_{\varepsilon \to 0}\frac1\varepsilon\int_0^t{\bf 1}_{[S,S+\varepsilon)}(\mathcal{\eta}_d(s))ds.$$

\end{remark}

\begin{theorem}
For the reflecting hyperbolic Brownian motion we have that
\begin{align}\label{eq:pdekerhyper}
\begin{cases}
\frac{\partial }{\partial t}\overline{u}_d(\eta,t)=\mathcal{P}u_d(\eta,t)+\tilde{\mathcal{P}}u_d(S^2/\eta,t),&\\
\overline{u}_d(\eta,0)=\delta(\eta),&\\
\frac{\partial}{\partial \eta}\overline{u}_d(\eta,t)\Big|_{\eta=S}=0,&
\end{cases}
\end{align}
and
\begin{align}\label{eq:pdedenhyper}
\begin{cases}
\frac{\partial }{\partial t}\overline{p}_d(\eta,t)=\mathcal{P}^*p_d(\eta,t)+\tilde{\mathcal{P}}^*p_d(S^2/\eta,t),&\\
\overline{p}_d(\eta,0)=\delta(\eta),&\\
\frac{\partial}{\partial t}\overline{p}_d(\eta,t)\Big|_{t=0}=0,&
\end{cases}
\end{align}
where
\begin{align*}
\tilde{\mathcal{P}}=\frac12\frac{\eta^4}{S^4}\left[\left(\frac{2}{\eta}-(d-1)\frac{S^2/\eta^2}{\tanh(S^2/\eta)}\right)\frac{\partial }{\partial \eta}+\frac{\partial^2 }{\partial\eta^2}\right]
\end{align*}
and
\begin{align*}
\tilde{\mathcal{P}}^*=\frac{1}{2\sinh^{d-1}(S^2/\eta)}\left\{\frac{\eta^4}{S^4}\left[\left(\frac{2}{\eta}+(d-1)\frac{S^2/\eta^2}{\tanh(S^2/\eta)}\right)\frac{\partial }{\partial \eta}+\frac{\partial^2 }{\partial\eta^2}\right]+\frac{d-1}{\sinh^2(S^2/\eta)}\right\}.
\end{align*}
\end{theorem}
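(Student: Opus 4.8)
The plan is to mirror the proof of Theorem \ref{genou}, replacing the Ornstein--Uhlenbeck generator $\mathcal{L}$ by the hyperbolic operator $\mathcal{P}$. By construction, the kernel $u_d(\eta,t)$ and the density $p_d(\eta,t)$ of $(\eta_d(t))_{t\ge0}$ satisfy $\frac{\partial}{\partial t}u_d=\mathcal{P}u_d$ and $\frac{\partial}{\partial t}p_d=\mathcal{P}^*p_d$. Since $\overline{u}_d(\eta,t)=u_d(\eta,t)+u_d(S^2/\eta,t)$ and $\overline{p}_d(\eta,t)$ is the analogous sum assembled with the inverted volume element, it is enough to compute the equation obeyed by the reflected term $u_d(S^2/\eta,t)$ (resp. $p_d(S^2/\eta,t)$) and add it to the equation for the direct term.

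First I would rename the argument $\eta'=S^2/\eta$ and write the heat equation for the reflected kernel,
$$\frac{\partial}{\partial t}u_d(\eta',t)=\frac12\left[\frac{\partial^2}{\partial\eta'^2}+\frac{d-1}{\tanh\eta'}\frac{\partial}{\partial\eta'}\right]u_d(\eta',t).$$
Then I would convert all $\eta'$-derivatives into $\eta$-derivatives via the inversion relations
$$\frac{\partial}{\partial\eta'}=-\frac{\eta^2}{S^2}\frac{\partial}{\partial\eta},\qquad \frac{\partial^2}{\partial\eta'^2}=\frac{\eta^4}{S^4}\left(\frac{\partial^2}{\partial\eta^2}+\frac{2}{\eta}\frac{\partial}{\partial\eta}\right),$$
which are exactly the substitutions already employed in Theorem \ref{genou} with $R$ replaced by $S$. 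Substituting and factoring out $\eta^4/S^4$ produces precisely $\tilde{\mathcal{P}}$, and adding the direct equation $\frac{\partial}{\partial t}u_d=\mathcal{P}u_d$ gives the first line of \eqref{eq:pdekerhyper}. The identical computation applied to $\mathcal{P}^*$, now retaining the surviving zeroth-order term $\frac{d-1}{\sinh^2\eta'}$ together with the weight $\sinh^{-(d-1)}(S^2/\eta)$, reproduces $\tilde{\mathcal{P}}^*$ and hence \eqref{eq:pdedenhyper}.

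To finish I would check the side conditions, exactly as in Theorem \ref{genou}. The initial condition $\overline{u}_d(\eta,0)=\delta(\eta)$ holds because the reflected mass $u_d(S^2/\eta,0)=\delta(S^2/\eta)$ is concentrated at $\eta=\infty$, outside the domain $0<\eta\le S$; the same remark yields $\overline{p}_d(\eta,0)=\delta(\eta)$, and the remaining density condition follows as in the Ornstein--Uhlenbeck case. The Neumann condition for $\overline{u}_d$ is a consequence of $S$ being the fixed point of the inversion $\eta\mapsto S^2/\eta$: differentiating the reflected term gives $\frac{\partial}{\partial\eta}u_d(S^2/\eta,t)=-\frac{S^2}{\eta^2}u_d'(S^2/\eta,t)$, with the prime denoting the derivative in the first argument, so that at $\eta=S$ this equals $-u_d'(S,t)$ and cancels $\frac{\partial}{\partial\eta}u_d(\eta,t)\big|_{\eta=S}=u_d'(S,t)$, whence $\frac{\partial}{\partial\eta}\overline{u}_d(\eta,t)\big|_{\eta=S}=0$.

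The calculations are mechanical, so the only genuinely delicate point is the treatment of the $\eta$-dependent coefficients $1/\tanh\eta'$ and $1/\sinh^2\eta'$ under the inversion: unlike the polynomial Ornstein--Uhlenbeck drift they do not simplify, and must be carried through as $1/\tanh(S^2/\eta)$ and $1/\sinh^2(S^2/\eta)$. In particular the factor $\eta^2/S^2$ coming from $\partial/\partial\eta'$ must combine with the overall $\eta^4/S^4$ to reproduce the stated coefficient $(d-1)\frac{S^2/\eta^2}{\tanh(S^2/\eta)}$, and the undifferentiated weight $\sinh^{-(d-1)}(S^2/\eta)$ must be positioned correctly in $\tilde{\mathcal{P}}^*$. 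Keeping that bookkeeping consistent is where I expect the main risk of algebraic error.
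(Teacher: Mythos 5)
Your proposal is correct and takes exactly the approach the paper does: the paper's own proof is a one-line remark that the argument of Theorem \ref{genou} carries over, and you have simply supplied the omitted details, with the inversion substitutions $\frac{\partial}{\partial\eta'}=-\frac{\eta^2}{S^2}\frac{\partial}{\partial\eta}$ and $\frac{\partial^2}{\partial\eta'^2}=\frac{\eta^4}{S^4}\left(\frac{\partial^2}{\partial\eta^2}+\frac{2}{\eta}\frac{\partial}{\partial\eta}\right)$ reproducing the stated $\tilde{\mathcal{P}}$ and $\tilde{\mathcal{P}}^*$, and the Neumann and initial conditions checked correctly. No gaps to flag.
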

\begin{proof}

The proof of the results \eqref{eq:pdekerhyper} and \eqref{eq:pdedenhyper} follows the same steps of the proof of Theorem \ref{genou} and therefore we omit the details.
\end{proof}

\begin{remark}
Since for $d=2$ we have that
\begin{align*}
P\{\eta_2(t)>\overline{\eta}\}=2\int_{\overline{\eta}}^\infty\frac{e^{-t/4}\phi e^{-\phi^2/4t}}{\sqrt{\pi}(\sqrt{2t})^3}\sqrt{\cosh\phi-\cosh\overline{\eta}}d\phi\
\end{align*}
(see Lao and Orsingher, 2007),
the following result holds
\begin{align*}
P\{\mathcal{E}_2(t)>\overline{\eta}\}&=\int_{\overline{\eta}}^S\frac{e^{-t/4}}{\sqrt{\pi}(\sqrt{2t})^3}\sinh\eta\int_\eta^\infty\frac{\phi e^{-\phi^2/4t}}{\sqrt{\cosh\phi-\cosh\eta}}d\phi d\eta\\
&\quad+\int_{\overline{\eta}}^S\frac{e^{-t/4}}{\sqrt{\pi}(\sqrt{2t})^3}\left(S/\eta\right)^2\sinh\left(S^2/\eta\right)\int_{S^2/\eta}^\infty\frac{\phi e^{-\phi^2/4t}}{\sqrt{\cosh\phi-\cosh(S^2/\eta)}}d\phi d\eta\\
&=(S^2/\eta=w)\\
&=\int_{\overline{\eta}}^{S^2/\overline{\eta}}\frac{e^{-t/4}}{\sqrt{\pi}(\sqrt{2t})^3}\sinh\eta\int_\eta^\infty\frac{\phi e^{-\phi^2/4t}}{\sqrt{\cosh\phi-\cosh\eta}}d\phi d\eta\\
&=\int_{S^2/\overline{\eta}}^\infty\frac{e^{-t/4}\phi e^{-\phi^2/4t}}{\sqrt{\pi}(\sqrt{2t})^3}\int_{\overline{\eta}}^{S^2/\overline{\eta}}\frac{\sinh\eta}{\sqrt{\cosh\phi-\cosh\eta}} d\eta d\phi\\
&\quad+\int_{\overline{\eta}}^{S^2/\overline{\eta}}\frac{e^{-t/4}\phi e^{-\phi^2/4t}}{\sqrt{\pi}(\sqrt{2t})^3}\int_{\overline{\eta}}^{\phi}\frac{\sinh\eta}{\sqrt{\cosh\phi-\cosh\eta}} d\eta d\phi\\
&=2\int_{S^2/\overline{\eta}}^\infty\frac{e^{-t/4}\phi e^{-\phi^2/4t}}{\sqrt{\pi}(\sqrt{2t})^3}(\sqrt{\cosh\phi-\cosh\overline{\eta}}-\sqrt{\cosh\phi-\cosh(S^2/\overline{\eta})})d\phi\\
&\quad+2\int_{\overline{\eta}}^{S^2/\overline{\eta}}\frac{e^{-t/4}\phi e^{-\phi^2/4t}}{\sqrt{\pi}(\sqrt{2t})^3}\sqrt{\cosh\phi-\cosh\overline{\eta}}d\phi\\
&=2\int_{\overline{\eta}}^\infty\frac{e^{-t/4}\phi e^{-\phi^2/4t}}{\sqrt{\pi}(\sqrt{2t})^3}\sqrt{\cosh\phi-\cosh\overline{\eta}}d\phi\\
&\quad-2\int_{S^2/\overline{\eta}}^\infty\frac{e^{-t/4}\phi e^{-\phi^2/4t}}{\sqrt{\pi}(\sqrt{2t})^3}\sqrt{\cosh\phi-\cosh(S^2/\overline{\eta})}d\phi\\
&=P\{\eta_2(t)>\overline{\eta}\}-P\{\eta_2(t)>S^2/\overline{\eta}\}.
\end{align*}

Also for $d=3$, we can find the same relationship between the distribution of $\mathcal{E}_3(t)$ and $\eta_3(t)$, that is
$$P\{\mathcal{E}_3(t)>\overline{\eta}\}=P\{\eta_3(t)>\overline{\eta}\}-P\{\eta_3(t)>S^2/\overline{\eta}\}.$$
\end{remark}
\begin{remark}
By means of the Millson formula we get that
\begin{align*}
P\{\eta_{d+2}(t)>\overline{\eta}\}&=-\frac{ e^{-dt}}{2\pi}\int_{\overline{\eta}}^\infty\sinh^{d}\eta\frac{\partial}{\partial \eta}u_d(\eta,t)d\eta\\
&=(\text{since} \lim_{\eta\to \infty}u_d(\eta,t)=0)\\
&=\frac{ e^{-dt}}{2\pi}\sinh^{d}\overline\eta u_d(\overline\eta,t)+d\frac{ e^{-dt}}{2\pi}\int_{\overline\eta}^\infty\cosh\eta\sinh^{d-1}\eta u_d(\eta,t)d\eta\\
&=\frac{ e^{-dt}}{2\pi}\left[\sinh^{d}\overline\eta u_d(\overline\eta,t)+dE\left(\cosh\eta_d(t){\bf 1}_{\{\eta_d(t)>\overline\eta\}}\right)\right]
\end{align*}
for $d\geq 2$. Then we obtain that
\begin{align*}
P\{\mathcal{E}_{d+2}(t)>\overline{\eta}\}&=\int_{\overline\eta}^{S^2/\overline\eta}\sinh^{d+1}\eta u_{d+2}(\eta,t)d\eta\\
&=\frac{ e^{-dt}}{2\pi}\left[\sinh^{d}\overline\eta u_d(\overline\eta,t)-\sinh^{d}(S^2/\overline\eta) u_d(S^2/\overline\eta,t)+dE\left(\cosh\eta_d(t){\bf 1}_{\{\overline\eta<\eta_d(t)<S^2/\overline\eta\}}\right)\right].
\end{align*}
\end{remark}

\subsection{Orthogonal reflection in spheres of the Poincar\`e disc}

The hyperbolic Brownian motion is also analyzed in the unit-radius Poincar\`e disc $D=\{w=re^{i\theta},|r|<1\}$ which is an alternative Euclidean model to the planar hyperbolic space. By using the following conformal Cayley mapping
$$w=\frac{z-i}{-iz+1}=\frac{iz+1}{z+i}$$
it is possible to transform $\mathbb{H}_2=\{z=x+iy,y>0\}$ into the unit-radius disc $D$. Since 
$$x=\frac{2r\cos\theta}{r^2-2r\sin\theta+1},\quad y=\frac{1-r^2}{r^2-2r\sin\theta+1}$$
the hyperbolic laplacian in polar coordinates becomes
\begin{align}
y^2\left(\frac{\partial^2}{\partial x^2}+\frac{\partial^2}{\partial y^2}\right)=\frac{(1-r^2)^2}{2^2}\left\{\frac1r\frac{\partial}{\partial r}\left(r\frac{\partial}{\partial r}\right)+\frac{1}{r^2}\frac{\partial^2}{\partial \theta^2}\right\}
\end{align}  
(see Comtet and Monthus, 1996 or Lao and Orsingher, 2007), and then the kernel of the hyperbolic Brownian motion $(D(t))_{t\geq 0}$ inside $D$ satisfies the following equation
\begin{align*}
\frac{\partial }{\partial t}k(r,t)=\frac{(1-r^2)^2}{2^2}\left\{\frac1r\frac{\partial}{\partial r}\left(r\frac{\partial}{\partial r}\right)\right\}k(r,t)
\end{align*}
with initial condition $h(r,0)=\delta(r)$.
The stochastic differential equation solved by the process $(D(t))_{t\geq 0}$ reads
\begin{equation}
dD(t)=\frac{(1-D^2(t))^2}{4D(t)}dt+\frac{1-D^2(t)}{\sqrt{2}}dW(t).
\end{equation}
Moreover between the hyperbolic distance in the Poincar\`e upper-half plane $\eta$ and $r$ the following relationship holds
$$\eta=\log\frac{1+r^2}{1-r^2}$$
or equivalently 
$$r=\tanh \frac\eta2.$$ 
We indicate the reflecting hyperbolic Brownian motion inside a disc with radius $V<1$ by $(\mathcal{D}(t))_{t\geq 0}$ means of circular inversion. Therefore, we have that
\begin{equation}
\mathcal{D}(t)=D(t){\bf 1}_{(0,V)}(D(t))+\frac{V^2}{ D(t)}{\bf 1}_{ [V,1)}(D(t)).
\end{equation}

By means of the same arguments exploited in the previous sections we can immediately obtain the results concerning the reflecting Brownian motion in $D$.  For this reason, in what follows, we omit the details of the proofs.

 The process $\mathcal{D}(t)$ admits kernel function given by
 \begin{equation}
 \overline{k}(r,t)=k(r,t)+k\left(\frac{V^2}{r},t\right),\quad 0<r\leq V<1,
 \end{equation}
 and probability distribution
  \begin{equation}
 \overline{h}(r,t)=rk(r,t)+\frac{V^4}{r^3}k\left(\frac{V^2}{r},t\right),\quad 0<r\leq V<1,
 \end{equation}
 where
 \begin{equation}
 k(r,t)=u_2\left(\log\frac{1+r^2}{1-r^2},t\right)\frac{4r}{1-r^4}.
 \end{equation}
 
 The kernel is solution to the Cauchy problem
 \begin{align*}
\begin{cases}
\frac{\partial }{\partial t} \overline{k}(r,t)=\frac{(1-r^2)^2}{2^2}\mathcal{M}k(r,t)+\frac{(1-(V^2/r)^2)^2}{2^2}\tilde{\mathcal{M}}k\left(\frac{V^2}{r},t\right),&\\
 \overline{k}(r,0)=\delta(r),&\\
\frac{\partial}{\partial r} \overline{k}(r,t)\Big|_{r=V}=0,
\end{cases}
\end{align*}
where ${\mathcal{M}}$ and $\tilde{\mathcal{M}}$ are the operator which define the partial differential equation governing the reflecting planar Brownian motion.

Finally, by applying again the Meyer-Ito rule, we are able to prove that
\begin{align*}
\mathcal{D}(t)
&=\int_0^t\left[{\bf 1}_{ (0,V)}(D(s))-\frac{V^2}{D^2(s)}{\bf 1}_{ [V,1)}(D(s))\right]dD(s)+\int_0^t\frac{V^2}{D^3(s)}{\bf 1}_{ [V,1)}(D(s))ds-L_t(V)\notag\\
&=\int_0^t\left\{\left[{\bf 1}_{ (0,V)}(D(s))-\frac{V^2}{D^2(s)}{\bf 1}_{ [V,1)}(D(s))\right]\frac{(1-D^2(s))^2}{4D(s)}+\frac{V^2}{D^3(s)}{\bf 1}_{ [V,1)}(D(s))\right\}ds\\
&\quad+\int_0^t\left[{\bf 1}_{ (0,V)}(\eta_d(s))-\frac{V^2}{D^2(s)}{\bf 1}_{ [V,1)}(D(s))\right]\frac{1-D^2(s)}{\sqrt{2}}dW(s)-L_t(V).
\end{align*}

\small{

}

\end{document}